\newtheorem{thm}{Theorem}[section]
\newtheorem{prop}[thm]{Proposition}
\newtheorem{lem}[thm]{Lemma}
\newtheorem{cor}[thm]{Corollary}
\newenvironment{remark}{\medskip\refstepcounter{thm}
\noindent{\bf Remark \thesection.\arabic{thm}\ }}{\medskip}
\newenvironment{proof}[1][,]{\medskip\ifcat,#1
\noindent{{\it Proof}:\ }\else\noindent{\it Proof of #1.\ }\fi}
{\hfill$\square$\medskip}
\DeclareMathOperator\vol{vol}
\DeclareMathOperator\GL{GL}
\DeclareMathOperator\Div{div}
\DeclareMathOperator\Diff{Diff}
\DeclareMathOperator\Ric{Ric}
\DeclareMathOperator\Vol{Vol}
\DeclareMathOperator\tnabla{\widetilde{\nabla}}
\def\d{{\rm d}}
\def\w{\wedge}
\DeclareMathOperator\tr{tr}
\def\C{\mathbb{C}}
\def\tU{\widetilde{\mathcal{U}}}
\def\tL{\widetilde{\mathcal{L}}}
\def\tF{\tilde{F}}
\def\CP{\mathbb{CP}}
\def\Sph{\mathbb{S}}
\def\R{\mathbb{R}}
\def\hZ{\hat{Z}}
\def\hW{\hat{W}}
\def\hiota{\hat{\iota}}
\DeclareMathOperator\Graph{Graph}
\begin{document}

\title{From minimal Lagrangian to $J$-minimal submanifolds: persistence and uniqueness}

\author{Jason D. Lotay\footnote{University College London, U.K.,  \texttt{j.lotay@ucl.ac.uk}}  $\;$and Tommaso Pacini\footnote{University of Torino, Italy,
\texttt{tommaso.pacini@unito.it}}}


\maketitle

\abstract{Given a minimal Lagrangian submanifold $L$ in a negative K\"ahler--Einstein manifold $M$, we show that any small K\"ahler--Einstein perturbation of $M$ induces a deformation of $L$ which is minimal Lagrangian with respect to the new structure. This provides a new source of examples of minimal Lagrangians. More generally, the same is true for the larger class of totally real $J$-minimal submanifolds in K\"ahler manifolds with negative definite Ricci curvature.}

\section{Introduction}\label{s:intro}

Let $(M,\overline{g})$ be a Riemannian manifold. Recall that a submanifold $\iota:L\rightarrow M$ is \textit{minimal} if it is a critical point of the Riemannian volume functional, \textit{i.e.}~if the mean curvature vector field $H$ vanishes. 

Now let $(M^{2n},\overline{\omega})$ be a $2n$-dimensional symplectic manifold. One says that a $n$-dimensional submanifold $\iota:L^n\rightarrow M$ is \textit{Lagrangian} if the pull-back $\iota^*\overline{\omega}$ of the symplectic form vanishes. 

It should be expected that, whenever $\overline{g}$, $\overline{\omega}$ are somehow compatible, these two conditions will interact   so that submanifolds which are both minimal and Lagrangian will have significant geometric properties.

The standard setting for this to happen is that of K\"ahler manifolds $(M,J,\overline{g},\overline{\omega})$. Here, minimal Lagrangian submanifolds are a classical topic and several results are known. 
However, it was first pointed out by Bryant \cite{Bryant} that imposing both these conditions generally results in an over-constrained problem. Geometrically this corresponds to the calculation that the ambient Ricci 2-form 
$\bar\rho$ must necessarily vanish along such submanifolds, \textit{i.e.}~$\iota^*\bar\rho=0$. The best way to remove this additional constraint on the submanifold is to assume $M$ is K\"ahler--Einstein (KE), \textit{i.e.}~$\bar\rho=\lambda\bar\omega$ 
for some constant $\lambda$: if $\lambda=0$ this eliminates the Ricci curvature globally, if $\lambda\neq 0$ it makes the Lagrangian and Ricci vanishing conditions coincide.

Assuming the KE condition on $M$, the main classical facts known about minimal Lagrangians are the following.

\begin{itemize}
 \item The Lagrangian condition is preserved under mean curvature flow. 
 \item In KE manifolds which are Ricci-flat (more precisely, Calabi--Yau manifolds), minimal Lagrangians are calibrated. Hence, they have the stronger property of minimizing volume in their homology class. 
In this setting there is a good deformation theory for minimal Lagrangians.
 \item In KE manifolds with negative Ricci curvature, minimal Lagrangians are strictly stable: the second variation of the Riemannian volume functional is (uniformly) strictly 
 positive at a minimal Lagrangian. 
\end{itemize}
In \cite{LPcoupled} and \cite{LPtotally} we initiated a new point of view on minimal Lagrangians, inserting them into the broader context of totally real submanifolds. In particular, building  
upon previous work in \cite{Borrelli} and \cite{Smoczyk}, we achieved the following.
\begin{itemize}
 \item We related the standard Riemannian volume functional and the corresponding mean curvature vector field $H$ to a modified $J$-volume functional and to a corresponding $J$-mean curvature vector field $H_J$: 
these quantities coincide on Lagrangians, but the modified versions have better properties on the larger class of totally real submanifolds, cf.~\cite{LPcoupled}.
 \item We defined new geometric flows extending the study of Lagrangian mean curvature flow in KE manifolds to more general ambient manifolds and to totally real submanifolds, cf.~\cite{LPcoupled}.
 \item In K\"ahler manifolds with non-positive Ricci curvature, we proved that the $J$-volume functional is convex with respect to a certain notion of geodesics in the space of totally real submanifolds, cf.~\cite{LPtotally}.
 \item We outlined an analogy between minimal Lagrangian geometry in negative KE manifolds and the geometry of K\"ahler metrics with constant scalar curvature, in terms of complexified diffeomorphism groups, convex functionals 
and moment maps, cf.~\cite{LPtotally}.
\end{itemize}
The paper at hand has two main goals. 

First, we wish to further develop the analytic theory of minimal Lagrangian submanifolds by showing that, in the negative KE case, any such submanifold will continue to exist under small KE perturbations of the ambient structure. Up to now this question was largely open, being known to hold only in the special case of negative KE surfaces \cite{YILee}. The techniques used there however are specific to that dimension and cannot be generalised. Our Theorem \ref{thm:persistence} proves persistence in all dimensions.

Second, we wish to show that, when $M$ has definite Ricci curvature (not necessarily KE), totally real geometry offers a natural replacement for the over-constrained minimal Lagrangian condition: 
\textit{$J$-minimal} submanifolds, \textit{i.e.}~the critical points of the $J$-volume functional. The two classes of submanifolds (minimal Lagrangian and $J$-minimal submanifolds)
 coincide in the KE case. 
Geometric manifestations of this point of view already appear in \cite{LPcoupled} and \cite{LPtotally}, but developments on the analytic side were blocked by the fact that the $J$-minimal equation is not elliptic. 
 The starting point of our analytic study here is the simple observation, cf.~Theorem \ref{thm:minlag_is_Jmin}, that such submanifolds are automatically Lagrangian with respect to $\bar\rho$, 
viewed as an alternative symplectic form on $M$ (but are not necessarily Lagrangian with respect to $\bar\omega$). As a consequence we show that, in the negative definite case, 
$J$-minimal submanifolds share the same uniqueness and persistence properties as minimal Lagrangians, cf.~Theorems \ref{thm:uniqueness} and \ref{thm:persistence_bis}. This leads to possibly the first examples of 
compact $J$-minimal submanifolds in non-KE ambient spaces.

The paper is organized as follows. Sections \ref{s:cpxvol} and \ref{s:grassmannian} review the geometry of totally real submanifolds following \cite{LPcoupled}, but also offer a new perspective on that theory and new results,
 cf.~Proposition \ref{prop:diff_Omega}, Remark \ref{rem:calibration} and (part of) Theorem \ref{thm:minlag_is_Jmin}. Section \ref{s:linearisation} is the technical core of the paper. 
Our main results are presented in Section \ref{s:Lag.KE} and applied to examples in Section \ref{s:examples}.

\paragraph{Acknowledgements.} We would like to thank Claude LeBrun for suggesting the problem of persistence of minimal Lagrangians to us, Andr\'e Neves for informing us of the reference \cite{White}, and 
Simon Donaldson, Nicos Kapouleas and Cristiano Spotti for interesting discussions.

JDL was partially supported by EPSRC grant EP/K010980/1. TP thanks the Scuola Normale Superiore, in Pisa, for hospitality and research funds.

This paper is dedicated to Paolo de Bartolomeis, who was also TP's advisor. His excellent lectures conveyed the idea that Geometry is not just a body of results. It is also a point of view, which can provide a guiding light in many other fields of Mathematics and Science.  

\section{Complex volumes and submanifolds}\label{s:cpxvol}
In order to achieve a better perspective on Lagrangian and minimal Lagrangian submanifolds, in \cite{LPcoupled} and \cite{LPtotally} we developed the differential geometry of the much larger class of totally real submanifolds. 
The goal of this and of the next section is to summarize and extend those results, also adopting a slightly different point of view, in terms of Grassmannians: this allows for a more flexible set of formulae concerning the 
 derivatives of complex volume forms.

\paragraph{Linear algebra.} Let $(V,J,h=\bar{g}-i\bar{\omega})$ be a Hermitian vector space of complex dimension $n$.

Recall that a (normalized) \textit{complex volume} form on $V$ is an $(n,0)$-form $\Omega$ such that $|\Omega|_h=1$.
A subspace $\pi\leq V$ of real dimension $n$ is called \textit{totally real} if $\pi\cap J\pi=\{0\}$, \textit{i.e.}~$V\simeq \pi\otimes\C$. This yields a splitting $V=\pi\oplus J\pi$. We will denote the corresponding projection maps by
\begin{equation*}
 \pi_L:V\rightarrow V\ (\mbox{with image }\pi),\ \ \ \pi_J:V\rightarrow V\ (\mbox{with image }J\pi).
\end{equation*}
The reason for this notation will become apparent below, when we introduce submanifolds $L$ and focus on the case $\pi=T_pL$. One can check as in \cite[Lemma 2.2]{LPcoupled} that
\begin{equation}\label{eq:proj.commute}
J\circ\pi_L=\pi_J\circ J\quad\text{and}\quad J\circ\pi_J=\pi_L\circ J.
\end{equation}

Choose $\Omega$ as above. Given vectors $v_1,\dots,v_n\in V$, it is simple to verify that the following conditions are equivalent: (i) the vectors are a complex basis; (ii) the real span of the vectors is a totally real subspace; (iii) $\Omega(v_1,\dots,v_n)\neq 0$.

We say that $\pi$ is $\Omega$-\textit{special totally real} (STR) if $\Omega(v_1,\dots,v_n)\in \R^+$. The basis then defines a canonical orientation for $\pi$.

Notice that the space of complex volume forms on $V$ is parametrized by the unit circle $\Sph^1$. The space of oriented totally real planes is parametrized by $\GL(n,\C)/\GL^+(n,\R)$. Finally, the space of oriented $\Omega$-STR planes is parametrized by $\{M\in \GL(n,\C): \det_\C M\in \R^+\}$, modulo $\GL^+(n,\R)$.

There exists an interesting converse to this set-up, which furnishes a key ingredient to the geometry of totally real submanifolds. Start by choosing an oriented totally real plane $\pi$. Let $v_1,\dots,v_n$ be any positive basis of $\pi$: this generates a real basis $v_1,\dots,v_n,Jv_1,\dots,Jv_n$ of $V$, thus a dual real basis of $V^*$. Set
\begin{equation*}
\Omega:=\frac{(v_1^*+i(Jv_1)^*)\wedge\dots\wedge(v_n^*+i(Jv_n)^*)}{|(v_1^*+i(Jv_1)^*)\wedge\dots\wedge(v_n^*+i(Jv_n)^*)|_h}.
\end{equation*}
It is elementary to check that $\Omega$ is independent of the chosen basis,  so it is a complex volume form on $V$ canonically defined by $\pi$, with respect to which $\pi$ is STR. We remark that if we start with $\Omega$ and choose an STR $\pi$, this construction yields a complex volume which is necessarily of the form $e^{i\theta}\cdot\Omega$, for some $\theta$. Since it takes positive real values on $\pi$, it must be that $e^{i\theta}=1$ so we have actually recovered the original $\Omega$. 

Notice that, up to here, we have only used the induced Hermitian metric on $\Lambda^{(n,0)}V^*$. The full metric on $V$ allows us two further options. First, it allows us to define the standard volume form on $\pi$,
\begin{equation*}
 \vol_g[\pi]:=e_1^*\wedge\dots\wedge e_n^*,
\end{equation*}
where $e_1,\dots,e_n$ is a positive $g$-orthonormal (ON) basis of $\pi$. Let $\vol_J[\pi]$ denote the restriction of $\Omega$ to $\pi$. Both $\vol_g[\pi]$ and $\vol_J[\pi]$ are real volume forms on $\pi$, so we can compare them. Writing
\begin{equation}\label{eq:canonical_vol}
\Omega=\frac{(e_1^*+i(Je_1)^*)\wedge\dots\wedge(e_n^*+i(Je_n)^*)}{|(e_1^*+i(Je_1)^*)\wedge\dots\wedge(e_n^*+i(Je_n)^*)|_h},
\end{equation}
one can check that $\vol_J[\pi]=\rho_J \cdot \vol_g[\pi]$, where
\begin{align}
\rho_J:&=|(e_1^*+i(Je_1)^*)\w\ldots\w (e_n^*+i(Je_n)^*)|_h^{-1}\nonumber\\
&=({\det}_{\C}h_{ij})^{\frac{1}{2}}=\sqrt{\vol_{\bar g} (e_1,\dots,e_n,Je_1,\dots,Je_n)},\label{h.mod.eq}
\end{align}
with $h_{ij}=h(e_i,e_j)$.

Second, the Hermitian metric on $V$ allows us to define a particular subclass of totally real planes via the \textit{Lagrangian} condition $\bar\omega_{|\pi}=0$; equivalently, $J\pi$ is orthogonal to $\pi$. One can check that $|\Omega(e_1,\dots,e_n)|\leq 1$, equivalently $\rho_J\leq 1$, with equality if and only if $\pi$ is Lagrangian. 

The $\Omega$-STR planes which are also Lagrangian are known in the literature as \textit{special Lagrangian}.

\paragraph{Differentiation of complex volume forms.}
Let $(M,J,h=\bar{g}-i\bar{\omega})$ be a Hermitian manifold endowed with a Hermitian connection $\tnabla$ with torsion tensor $\widetilde{T}$. Recall that $M$ is \textit{almost K\"ahler} if $\bar{\omega}$ is closed, 
thus $M$ is symplectic; in this case we will always use the \textit{Chern connection}, whose torsion tensor $\widetilde{T}$ is of type $(1,1)$.  
We have that $M$ is \textit{K\"ahler} if $\bar{\omega}$ is closed and $J$ is integrable; in this case the Chern connection is the Levi-Civita connection $\overline{\nabla}$ of $\bar{g}$, which is torsion-free.

It follows from Chern--Weil theory that the first Chern class of $M$ can be represented in terms of curvature. Specifically,
for $X,Y\in T_pM$ we set 
\begin{equation}\label{P.eq}
\widetilde{P}(X,Y):=\overline{\omega}(\widetilde{R}(X,Y)\overline{e}_j,\overline{e}_j),
\end{equation}
where $\widetilde{R}$ is the curvature of $\tnabla$ and $\overline{e}_1,\ldots,\overline{e}_{2n}$ is an orthonormal basis for $T_pM$.  Then $\widetilde{P}$ is a closed $2$-form on $M$, 
$-\frac{i}{2}\widetilde{P}$ is the curvature of $K_M^*$ and
$$2\pi c_1(M)=\left[\frac{1}{2}\widetilde{P}\right].$$ 
In the K\"ahler case, defining the Ricci form $\bar{\rho}(\cdot,\cdot):=\overline{\Ric}(J\cdot,\cdot)$, one finds
\begin{equation}\label{P.rho.eq}
\frac{1}{2}\widetilde{P}(X,Y)=\bar{\rho}(X,Y).
\end{equation}

Let $\gamma=\gamma(t)$ be a curve in $M$ with $\dot{\gamma}(0)=Z\in T_pM$. Assume we are given a complex volume form $\Omega(t)$ over $\gamma(t)$, \textit{i.e.}~a smooth unit section of the canonical bundle $K_{M|\gamma}$. We can calculate $\widetilde{\nabla}_Z\Omega$ using the following formula which extends \cite[Proposition 4.3]{LPcoupled}, concerning curves of complex volume forms generated by submanifolds, to the arbitrary curves of complex volume forms of interest here. The proof is similar, so we omit it.
\begin{prop}\label{prop:diff_Omega} 
Choose any family of totally real planes $\pi(t)$ over $\gamma(t)$ such that each $\pi(t)$ is $\Omega(t)$-STR. Let $\pi_J(t)$ denote the corresponding projections and $e_1(t),\dots, e_n(t)$ be any positive $g(t)$-orthonormal basis of $\pi(t)$, where $g(t)$ is the induced metric on $\pi(t)$. Then,
\begin{equation*}
\widetilde\nabla_Z\Omega=i\,g(J\pi_J\widetilde\nabla_Ze_i,e_i)\cdot\Omega.
\end{equation*}
\end{prop}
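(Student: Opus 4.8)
The plan is to exploit the fact that the canonical bundle $K_M$ has complex one-dimensional fibres, so that necessarily $\tnabla_Z\Omega=\mu\,\Omega$ for a single complex scalar $\mu=\mu(Z)$, and then to pin down $\mu$. One half of this comes for free: $\tnabla$ is Hermitian and $|\Omega(t)|_h\equiv 1$ along $\gamma$, so differentiating $|\Omega|_h^2=1$ along $\gamma$ gives $2\,\Ree\,h(\tnabla_Z\Omega,\Omega)=2\,\Ree\,\mu=0$; hence $\mu$ is purely imaginary, and it remains only to compute $\Imm\,\mu$.

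For the imaginary part I would use the explicit presentation \eqref{eq:canonical_vol} of $\Omega$ relative to the chosen frame. Put $\epsilon_i(t):=e_i(t)^*+i\,(Je_i(t))^*$, duals taken with respect to the real frame $e_1,\dots,e_n,Je_1,\dots,Je_n$ of $T_{\gamma(t)}M$ (a basis precisely because $\pi(t)$ is totally real). Since $\pi(t)$ is $\Omega(t)$-STR, \eqref{eq:canonical_vol}--\eqref{h.mod.eq} give $\Omega=\rho_J\,\epsilon_1\w\cdots\w\epsilon_n$ with $\rho_J>0$ real. Differentiating this product with $\tnabla$ along $\gamma$ is a purely tensorial operation, so neither the torsion nor the curvature of $\tnabla$ enters; and since $\tnabla J=0$, the operator $\tnabla_Z$ preserves the space of $(1,0)$-forms, so $\tnabla_Z\epsilon_j=\sum_k C_{kj}\epsilon_k$ for a complex matrix $C=C(Z)$. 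The Leibniz rule then gives $\mu=Z\log\rho_J+\tr C$. As $\rho_J$ is real and positive, $Z\log\rho_J\in\R$, whence $\Imm\,\mu=\Imm\,\tr C$; in particular there is no need to differentiate $\rho_J$ at all.

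It then remains to compute $\tr C$ and take its imaginary part. Pairing $\tnabla_Z\epsilon_j=\sum_k C_{kj}\epsilon_k$ with the dual $(1,0)$-frame $Z_k:=\tfrac12(e_k-iJe_k)$ (for which $\epsilon_j(Z_k)=\delta_{jk}$ is constant) yields $C_{jj}=-\epsilon_j(\tnabla_Z Z_j)=-\epsilon_j(\tnabla_Z e_j)$, the last step because $\epsilon_j$ annihilates $(0,1)$-vectors while $\tnabla_Z Z_j=(\tnabla_Z e_j)^{1,0}$. Expanding $\tnabla_Z e_j=\sum_k(\alpha_{kj}e_k+\beta_{kj}Je_k)$ in the real frame gives $\epsilon_j(\tnabla_Z e_j)=\alpha_{jj}+i\,\beta_{jj}$, so $\Imm\,\tr C=-\sum_j\beta_{jj}$. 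Finally one recognizes the right-hand side of the claimed formula: in the real frame $\pi_J$ projects onto $\Span\{Je_k\}$, so $J\pi_J\tnabla_Z e_j=-\sum_k\beta_{kj}e_k$ and, the $e_k$ being $g$-orthonormal, $g(J\pi_J\tnabla_Z e_j,e_j)=-\beta_{jj}$. Summing over $j$ gives $\Imm\,\mu=g(J\pi_J\tnabla_Z e_i,e_i)$, and together with $\Ree\,\mu=0$ this is precisely $\tnabla_Z\Omega=i\,g(J\pi_J\tnabla_Z e_i,e_i)\cdot\Omega$.

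I expect the only genuine difficulty to be the bookkeeping in the last step: the frame $e_1,\dots,e_n,Je_1,\dots,Je_n$ is not $\bar g$-orthonormal unless $\pi$ is Lagrangian, so one must keep careful track of the frame-component functionals $e_j^*,(Je_j)^*$ versus metric projections, and of how $J$ and $\pi_J$ act on these components. It is also worth noting precisely where the STR hypothesis is used, namely to guarantee that $\rho_J>0$ is real, which is exactly what makes the $Z\log\rho_J$ term contribute nothing to the imaginary part. Everything else is formal and parallels the argument for \cite[Proposition 4.3]{LPcoupled}.
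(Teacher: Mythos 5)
Your argument is correct, and every step that needs justification is justified: the STR hypothesis is exactly what identifies $\Omega(t)$ with the canonical form $\rho_J\,\epsilon_1\wedge\cdots\wedge\epsilon_n$ built from $\pi(t)$ (as in the remark following \eqref{eq:canonical_vol}, positivity on $\pi$ pins the phase to $1$); the Hermitian compatibility of $\widetilde{\nabla}$ together with $|\Omega(t)|_h\equiv 1$ gives $\Ree\mu=0$; since $\widetilde{\nabla}J=0$ the connection preserves $(1,0)$-forms, so the connection matrix $C$ in the coframe $\epsilon_j$ is well defined, and your bookkeeping with the dual frame $Z_k=\tfrac12(e_k-iJe_k)$ and the real expansion of $\widetilde{\nabla}_Ze_j$ correctly yields $\Imm\operatorname{tr}C=g(J\pi_J\widetilde{\nabla}_Ze_i,e_i)$. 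The paper itself omits the proof, deferring to \cite[Proposition 4.3]{LPcoupled}, so I cannot match you line by line against it, but your route is in the same spirit as that computation (canonical coframe representation plus a trace identity for the connection/frame matrices, exactly the style of the well-definedness lemma that follows the proposition). A pleasant feature of your version is that the norm argument disposes of the real part, so $\rho_J$ never has to be differentiated and the whole computation reduces to $\Imm\operatorname{tr}C$. One small point of emphasis: what STR buys you is not that $\rho_J>0$ (that holds by definition of $\rho_J$) but that the proportionality factor between $\Omega(t)$ and $\epsilon_1\wedge\cdots\wedge\epsilon_n$ is this positive real number rather than $e^{i\theta(t)}\rho_J$, whose phase derivative would otherwise contaminate the imaginary part — your closing comment says essentially this, and with that reading the proof is complete.
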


Proposition \ref{prop:diff_Omega} introduces the quantity $g(J\pi_J\widetilde\nabla_Ze_i,e_i)$ and gives indirect evidence that it is independent of the particular choices made. This is not immediately obvious, so we give a direct proof of this fact.

\begin{lem} Let $\Omega(t)$ be a family of complex volume forms on $M$, as above. Choose any two families $\pi(t)$, $\pi'(t)$ of $\Omega(t)$-STR planes. 
Let $g$, $g'$ and $\pi_J$, $\pi_J'$ denote the corresponding metrics and projections. Choose any two families of positive ON frames $e_i(t)$, $e_i'(t)$ for $\pi(t),\pi'(t)$ respectively. Then
\begin{equation*}
 g(J\pi_J\widetilde\nabla_Ze_i,e_i)=g'(J\pi_J'\widetilde\nabla_Ze_i',e_i').
\end{equation*}
\end{lem}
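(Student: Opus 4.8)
The plan is to show that the displayed quantity equals the intrinsic object $-i\,\widetilde\nabla_Z\Omega/\Omega$, this ratio being well defined in the one-dimensional fibre of the canonical bundle $K_M$ at $\gamma(t)$ and manifestly independent of any auxiliary choice. Once Proposition~\ref{prop:diff_Omega} is granted this is a one-liner: applied to the two sets of choices it gives
\[
 i\,g\big(J\pi_J\widetilde\nabla_Ze_i,e_i\big)\,\Omega \;=\; \widetilde\nabla_Z\Omega \;=\; i\,g'\big(J\pi_J'\widetilde\nabla_Ze_i',e_i'\big)\,\Omega ,
\]
and $\Omega(0)\neq 0$ forces the two coefficients to agree. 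Since this is exactly the ``indirect'' reasoning we wish to replace, I would instead argue by pure linear algebra, splitting the comparison into a change of orthonormal frame with the plane held fixed, followed by a change of plane. It is convenient to note at the outset that, by \eqref{eq:proj.commute}, $J\pi_J\widetilde\nabla_Ze_i=\pi_LJ\widetilde\nabla_Ze_i\in\pi$, so every pairing below may be taken with respect to $\bar g$ rather than the induced metric, and similarly for the primed data.

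\emph{Changing the frame.} Here I would assume $\pi'(t)=\pi(t)$ for all $t$, so $\pi_J'=\pi_J$, and write $e_i'(t)=A_{ij}(t)e_j(t)$ with $A(t)\in\SO(n)$ (both frames being positive and orthonormal). Expanding $\widetilde\nabla_Z(A_{ij}e_j)=(ZA_{ij})e_j+A_{ij}\widetilde\nabla_Ze_j$ and substituting, the terms carrying a derivative $ZA_{ij}$ of the transition matrix vanish since $\pi_Je_j=0$ (as $e_j\in\pi=\ker\pi_J$); the surviving terms carry the factor $A_{ij}A_{ik}$, and summing over $i$ with $A^{\top}A=I$ reduces this to $\delta_{jk}$. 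This recovers $\bar g(J\pi_J\widetilde\nabla_Ze_i,e_i)$, so the quantity depends only on the family of planes; call it $c(\pi)$.

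\emph{Changing the plane.} I would fix the positive ON frame $e_1,\dots,e_n$ of $\pi(t)$, hence the real frame $e_1,\dots,e_n,Je_1,\dots,Je_n$ of $T_{\gamma(t)}M$, in which $\{e_j\}$ is a complex basis with $Je_j=ie_j$. Then any frame of a second totally real plane $\pi'(t)$ can be written $e_i'(t)=(\Ree M_{ij}(t))\,e_j(t)+(\Imm M_{ij}(t))\,Je_j(t)$ with $M(t)\in\GL(n,\C)$, and $\pi'(t)$ is $\Omega(t)$-STR precisely when $\det_\C M(t)\in\R^+$; differentiating this last condition in $t$ gives the key constraint $\Imm\tr\big(M(t)^{-1}\dot M(t)\big)=0$. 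I would then expand $c(\pi')$ directly, using $\widetilde\nabla_Z(Je_j)=J\widetilde\nabla_Ze_j$ (since $\tnabla$ is Hermitian) together with the explicit block form, in terms of $M(0)$, of the projection $\pi_J'$ onto $J\pi'$ along $\pi'$, and show that after invoking Step~1 to replace $e_i'$ by a convenient ON frame the STR constraint cancels exactly those terms not already present in $c(\pi)=\bar g(J\pi_J\widetilde\nabla_Ze_i,e_i)$.

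I expect this last cancellation to be the main obstacle: bookkeeping the real and imaginary parts of $M$, $M^{-1}$ and $\dot M$ alongside the projection $\pi_J'$ is delicate, and one must see clearly why only the combination pinned down by $\det_\C M\in\R^+$ --- and not the full matrix $M$ --- survives in the answer. An alternative that sidesteps the heaviest algebra is to join $\pi(t)$ to $\pi'(t)$ through a path $\pi_s(t)$ of $\Omega(t)$-STR planes (the model space $\{\det_\C\in\R^+\}/\GL^+(n,\R)$ being connected), to use Step~1 to make $c(\pi_s)$ well defined, and to verify $\partial_s c(\pi_s)=0$; computing $\partial_s c$ again rests on $\tnabla J=0$, on $\pi_J$ annihilating its own plane, and on the infinitesimal STR condition along the path.
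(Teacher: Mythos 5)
Your opening observation---that the equality follows in one line from Proposition \ref{prop:diff_Omega}, since both choices compute the same coefficient of $\tnabla_Z\Omega$ against the nonvanishing section $\Omega$---is logically valid, but, as you yourself note, it is exactly the indirect argument this lemma is meant to replace, so it cannot stand as the proof. Your Step 1 (invariance under an $\SO(n)$ rotation of the ON frame with the plane held fixed) is correct but easy; the entire content of the lemma is in Step 2, and that step is never carried out. You set up $e_i'(t)=M_{ij}(t)e_j(t)$ with $M(t)\in\GL(n,\C)$, correctly extract the constraint $\Imm\tr_\C(M^{-1}\dot M)=0$ from ${\det}_\C M\in\R^+$, and then only assert that ``the STR constraint cancels exactly those terms not already present in $c(\pi)$,'' explicitly flagging this cancellation as the main obstacle without resolving it. The fallback you offer (a path $\pi_s$ of STR planes and the claim $\partial_s c(\pi_s)=0$) is the same computation in a different variable and is likewise only asserted. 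So, as written, the proposal establishes invariance under reparametrisation of the frame but not under change of the STR plane, which is the actual claim; there is also a small wrinkle in that the frame $(\Ree M_{ij})e_j+(\Imm M_{ij})Je_j$ is not ON in general, so ``invoking Step 1'' does not by itself let you compute $c(\pi')$ from an arbitrary such frame.

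The missing idea is precisely the computation the paper uses to close this gap, and it removes the block-form bookkeeping you were dreading. Writing $e_i'=Ae_i$ for the complex-linear $A$ with ${\det}_\C A\in\R^+$ and $\bar g(Ae_i,Ae_j)=\delta_{ij}$ (so $\pi_J'=A\pi_JA^{-1}$, and there is no need to separate frame change from plane change), the difference of the two quantities reduces to $\bar g(J\pi_JBe_i,e_i)$ with $B:=A^{-1}\tnabla_ZA$. The key identity is $\tnabla_Z{\det}_\C A=\tr_\C(A^{-1}\tnabla_ZA)=\bar g(\pi_LBe_i,e_i)-i\,\bar\omega(\pi_JBe_i,e_i)$, where the projections enter exactly because the real frame $\{e_1,\dots,e_n,Je_1,\dots,Je_n\}$ is not orthogonal. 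Since ${\det}_\C A$ remains real, the imaginary part vanishes, and $\bar\omega(\pi_JBe_i,e_i)=\bar g(J\pi_JBe_i,e_i)$ is precisely the unwanted term. In your notation, this is the identification of $\Imm\tr_\C(M^{-1}\dot M)$ with the extra contribution produced by differentiating $e_i'=M_{ij}e_j$; your constraint $\Imm\tr_\C(M^{-1}\dot M)=0$ is the right input, but without this trace identity it never gets connected to the terms you need to kill, and that connection is the substance of the proof.
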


\begin{proof} Using the parametrization of STR planes given above one can show that $e_i'(t)=A(t)e_i(t)$ for some $A(t)\in\GL(T_pM,\C)$ such that, for all $t$, $\det_\C A\in\R^+$ and $\bar g(Ae_i,Ae_j)=\delta_{ij}$. 
Furthermore, $\pi_J'=A\pi_JA^{-1}$. Thus
\begin{align*}
g'(J\pi_J'\widetilde\nabla_Ze_i',e_i')&=\bar g(JA\pi_JA^{-1}\widetilde\nabla_Z(Ae_i),Ae_i)\\
&=\bar g(AJ\pi_JA^{-1}((\widetilde\nabla_ZA)e_i+A(\widetilde{\nabla}_Ze_i)),Ae_i)\\
&=\bar g(J\pi_JA^{-1}(\widetilde\nabla_ZA)e_i, e_i)+\bar g(J\pi_J\widetilde{\nabla}_Ze_i,e_i).
\end{align*}
It now suffices to show that $\bar g(J\pi_JA^{-1}(\widetilde\nabla_ZA)e_i, e_i)=0$.

Using a parallel basis along $\gamma$ we can identify $A$ with a family of matrices in $GL(n,\C)$; then $\tnabla_ZA=\frac{d}{dt}A$. A standard formula for matrices shows that 
$$\tnabla_Z\textrm{det}_\C A=\tr_\C(A^{-1}\tnabla_ZA)=\bar{g}(\pi_LBe_i,e_i)-i\bar{\omega}(\pi_JBe_i,e_i),$$ where we set $B:=A^{-1}\tnabla_ZA$ and the projections are included since the basis given by 
$\{e_1,\ldots,e_n,Je_1,\ldots,Je_n\}$ is not orthogonal.

Since $\det_\C A$ is real it follows that $\bar\omega(\pi_J Be_i,e_i)=0$, and thus we have that $\bar g(J\pi_J Be_i,e_i)=0$.
\end{proof}

In other words, the quantity at hand, $g(J\pi_J\widetilde\nabla_Ze_i,e_i)$, depends only on the STR equivalence classes of the family of planes.
\paragraph{Totally real submanifolds.}
Let $M$ be a Hermitian manifold. 
Let $L$ be a smooth compact oriented $n$-dimensional manifold. An immersion $\iota:L\to M$ is \textit{totally real} if each tangent space $\iota_*(T_pL)$ is totally real in $T_{\iota(p)}M$. We will often identify $L$ with its image in $M$. With this notation one obtains a splitting
\begin{equation}\label{eq:splitting}
 T_pM=T_pL\oplus J(T_pL)
\end{equation}
with projections $\pi_L:T_pM\to T_pM$ and $\pi_J:T_pM\to T_pM$, as above. 

Together with the orientability assumption on $L$, the splitting \eqref{eq:splitting} implies that the pull-back bundle $K_M[\iota]:=\iota^*K_M$ over $L$ is trivial. Applying (\ref{eq:canonical_vol}) to each $T_pL$ we obtain a canonical section $\Omega_J[\iota]$ of $K_M[\iota]$.
As explained, $L$ is automatically $\Omega_J[\iota]$-STR so $\vol_J[\iota]:=\iota^*(\Omega_J[\iota])$ defines a \textit{real} volume form on $L$. Specifically,
\begin{equation}\label{eq:volJ_vs_vol}
\vol_J[\iota]=\rho_J\cdot\vol_g[\iota], 
\end{equation}
where $\rho_J$ is defined as in (\ref{h.mod.eq}) and $\vol_g[\iota]$ is the standard volume form on $L$ defined by $\iota$. 

We can use Proposition \ref{prop:diff_Omega} to calculate the derivatives $\widetilde\nabla_Z\Omega_J[\iota]$. By linearity it suffices to consider the two cases $Z:=X$ and $Z:=JX$, for some $X$ tangent to $L$.

Consider the case when $Z:=X$ is tangent to $L$. One may easily check that, given a locally defined tangent vector field $v$ and function $f$ on $L$, $J\pi_J\widetilde\nabla_X(fv)=f\cdot J\pi_J\widetilde\nabla_X v$. Using this fact one can show that
\begin{equation}
 v\in T_pL\rightarrow J\pi_J\widetilde\nabla_X v\in T_pL
\end{equation}
is a well-defined endomorphism of $T_pL$. Let $\xi_J[\iota](X)$ denote its trace, so that $\xi_J[\iota]$ is a 1-form on $L$. It follows from Proposition \ref{prop:diff_Omega} that 
\begin{equation}\label{eq:nabla_Omega}
 \widetilde\nabla_X\Omega_J[\iota]=i\,\xi_J[\iota](X)\cdot\Omega_J[\iota],
\end{equation}
showing that $\xi_J[\iota]$ is the \textit{connection 1-form} associated to the section $\Omega_J[\iota]$ of the complex line bundle $K_M[\iota]$ over $L$. In \cite{LPcoupled} this is called the \textit{Maslov 1-form} of $\iota$; its role with respect to Maslov index theory and holomorphic curves with boundary on $L$ is explained in \cite{PMaslov}. 

Standard theory shows that $\d(i\,\xi_J[\iota])$ is the curvature of the line bundle $K_M[\iota]$, thus
\begin{equation}\label{eq:dxi=ric}
 \d\xi_J[\iota]=\frac{1}{2}\iota^*\widetilde{P}.
\end{equation}

Now assume given a 1-parameter family $\iota_t$ of totally real immersions. Let $Z:=\frac{\partial\iota_t}{\partial t}_{|t=0}$. Assume $Z$ is of the form $Z=JX$, for some vector field tangent to $\iota_0$. 
The proof of \cite[Proposition 5.3]{LPtotally} shows that 
$$g(J\pi_J\widetilde\nabla_{JX}e_i,e_i)=-\frac{\Div(\rho_J X)}{\rho_J}+2g(J\pi_J\widetilde{T}(JX,e_i),e_i).$$ It follows from Proposition \ref{prop:diff_Omega} that, in the K\"ahler setting, 
\begin{equation}
 \widetilde\nabla_{JX}\Omega_J[\iota]=-i\frac{\Div(\rho_J X)}{\rho_J}
 \cdot\Omega_J[\iota].
\end{equation}

\paragraph{The J-volume functional.} Let $M$ be a Hermitian manifold. Recall the Riemannian volume functional on the space of all immersions $\iota:L\rightarrow M$, defined by integrating the standard volume form: $\Vol_g(\iota):=\int_L\vol_g[\iota]$.

The space of totally real immersions is an open subset of the space of all immersions. Restricting to this domain, we can alternatively consider the \textit{$J$-volume} functional $\Vol_J(\iota):=\int_L\vol_J[\iota]$. One should think of $\Vol_J$ as a modified volume which takes into account the totally real condition.

It is interesting to compare these two functionals on their common domain. In particular, it follows from (\ref{h.mod.eq}) and (\ref{eq:volJ_vs_vol}) that $\Vol_J$ furnishes a lower bound for $\Vol_g$: $\Vol_J(\iota)\leq \Vol_g(\iota)$, with equality if and only if $\iota$ is \textit{Lagrangian}, \textit{i.e.}~$\iota^*\overline\omega=0$.

In general, the volume and $J$-volume functionals on totally real immersions will have different critical points. However, in \cite{LPcoupled} we show that if $\iota$ is Lagrangian then 
the standard volume and the $J$-volume agree to first order, \textit{i.e.}~the corresponding gradients also coincide. 

When $M$ is K\"ahler we can go further. Let $H_J[\iota]$ denote the \textit{$J$-mean curvature} vector field, \textit{i.e.}~the gradient of the $J$-volume functional. In \cite[Theorem 5.2]{LPcoupled} we show that $H_J$ 
and $\xi_J$ basically coincide: specifically, they are related by the formula
\begin{equation}\label{eq:xi=H}
 H_J=-J\iota_*(\xi_J^\#).
\end{equation}
More generally, when $M$ is almost K\"ahler then the gradient of $\Vol_J$ incorporates terms generated by the torsion of the Chern connection, and (\ref{eq:xi=H}) holds only up to torsion corrections. 

\begin{remark}\label{rem:calibration}
It is interesting to compare $\Vol_J$ and $\Vol_g$ also from other points of view. The main feature of totally real submanifolds is that, through the isomorphism $TL\simeq J(TL)$, 
they manage to relate extrinsic information regarding $TM_{|L}$ to intrinsic information. Two manifestations of this are as follows.
\begin{itemize}
 \item The form $\vol_J[\iota]$ extends from $T_pL$ to $T_pM$, via $\Omega_J[\iota]$: this is similar in spirit (though weaker, because it is  localized to $L$) to the situation of calibrated geometry, where the calibrating form offers a global extension of the standard volume form on a calibrated submanifold. This point of view is further developed in Section \ref{s:grassmannian}, then applied in Section \ref{s:linearisation} to obtain a simple proof of the linearisation formula for the Maslov form.
 \item Replacing $\vol_g$ with $\vol_J$ implies substituting a section of the real line bundle of volume forms with a section of a complex line bundle: this generates extra geometry via the curvature of the complex line bundle, which is ultimately related to the Ricci curvature of $M$.
\end{itemize}
Another manifestation appears in the notion of geodesics in the space of totally real submanifolds, introduced in \cite{LPtotally}.
\end{remark}

\paragraph{$J$-minimal immersions.}
Let $M$ be a K\"ahler manifold. 
Recall that an immersion is \textit{minimal} if it is a critical point of the standard volume functional, \textit{i.e.}~if the mean curvature vector field $H$ vanishes. We say that a totally real immersion 
is \textit{$J$-minimal} if it is a critical point of the $J$-volume functional.

Equations (\ref{eq:nabla_Omega}) and (\ref{eq:xi=H}) provide an alternative interesting geometric interpretation of the $J$-minimal immersions: these are the totally real immersions $\iota$ for which $\xi_J[\iota]=0$, 
\textit{i.e.}~$\Omega_J[\iota]$ is parallel. This condition is also notable for the following reasons.

First, suppose that $\bar\rho$ has a definite sign, \textit{i.e.}~$\pm\overline\Ric$ is a Riemannian metric. This forces $c_1(M)$ to have a definite sign. Since $\bar\rho$ is closed and of type $(1,1)$, 
it defines a symplectic (actually K\"ahler) structure on $M$. Using (\ref{P.rho.eq}) and (\ref{eq:dxi=ric}) we see that $\xi_J[\iota]=0$ implies that $\iota^*\bar\rho=\d\xi_J[\iota]=0$, so the submanifold is 
 $\bar\rho$-Lagrangian. Notice that any $\bar\rho$-Lagrangian is automatically totally real because $T_pL$ and $J(T_pL)$ are orthogonal with respect to the metric $\pm\overline{\Ric}$.

Now assume $M$ is K\"ahler--Einstein with non-zero scalar curvature. In this case the two K\"ahler structures given by $\bar\omega$ and $\bar\rho$ are equal (up to a multiplicative constant) so the submanifold is also Lagrangian in the standard sense. In \cite[Proposition 5.3]{LPcoupled} we showed that the converse also holds: the only critical points of the $J$-volume are the minimal Lagrangian submanifolds. 
We summarize as follows.

\begin{thm}\label{thm:minlag_is_Jmin}
Let $M$ be a K\"ahler manifold and $\iota:L\to M$ a totally real immersion. The following conditions are equivalent:
\begin{itemize}
\item $\iota$ is $J$-minimal;
\item $H_J[\iota]=0$;
\item $\xi_J[\iota]=0$;
\item $\Omega_J[\iota]$ is parallel.
\end{itemize}
If $M$ is K\"ahler-Einstein with non-zero scalar curvature then these conditions are also equivalent to $\iota$ being minimal Lagrangian. More generally, if $M$ has definite Ricci form then these conditions imply that $\iota$ is $\bar\rho$-Lagrangian.
\end{thm}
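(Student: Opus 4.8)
The plan is to establish the chain of equivalences by assembling the identities already collected in the excerpt, and then to deduce the two additional statements about the K\"ahler--Einstein case and the definite Ricci case.

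First I would handle the core equivalences. The equivalence ``$\iota$ is $J$-minimal'' $\Leftrightarrow$ ``$H_J[\iota]=0$'' is the definition of $H_J$ as the gradient of $\Vol_J$, so there is nothing to prove. For ``$H_J[\iota]=0$'' $\Leftrightarrow$ ``$\xi_J[\iota]=0$'' I would invoke formula \eqref{eq:xi=H}, namely $H_J=-J\iota_*(\xi_J^\#)$: since $J$ is an isomorphism and $\iota$ is an immersion, $\iota_*$ is injective, and raising indices with the metric is an isomorphism, so $H_J=0$ iff $\xi_J^\#=0$ iff $\xi_J=0$. Finally, ``$\xi_J[\iota]=0$'' $\Leftrightarrow$ ``$\Omega_J[\iota]$ is parallel'' follows directly from \eqref{eq:nabla_Omega}: decomposing $Z\in T_pM=T_pL\oplus J(T_pL)$, the tangential part of $\widetilde\nabla_Z\Omega_J[\iota]$ is $i\,\xi_J[\iota](X)\cdot\Omega_J[\iota]$ and the $J$-normal part is $-i\frac{\Div(\rho_J X)}{\rho_J}\cdot\Omega_J[\iota]$; so $\Omega_J[\iota]$ parallel forces $\xi_J[\iota]=0$, and conversely if $\xi_J[\iota]=0$ then $\d\xi_J[\iota]=0$, but $\d\xi_J[\iota]=\tfrac12\iota^*\widetilde P=\iota^*\bar\rho$, and more to the point $\xi_J=0$ together with \eqref{eq:xi=H} gives $H_J=0$, hence by the first-variation formula the normal divergence term must also vanish, so $\widetilde\nabla_Z\Omega_J[\iota]=0$ for all $Z$. (One must be slightly careful here: a cleaner route is to observe that $\xi_J=0$ directly kills the tangential derivative, while the statement that a unit section with vanishing connection $1$-form in the tangential directions is in fact parallel in all directions follows because $\Omega_J[\iota]$ is, up to the Hermitian normalization, determined pointwise by $T_pL$ and the variation of $T_pL$ in $J$-normal directions is itself controlled by $\rho_J$ and hence by $\xi_J$; I would simply cite the computation leading to the displayed formula for $\widetilde\nabla_{JX}\Omega_J[\iota]$ and note it vanishes when $\xi_J=0$ via the relation between $\Div(\rho_J X)$ and $\xi_J$, or alternatively just take the last bullet as shorthand for ``parallel in the tangential directions'', which is what \eqref{eq:nabla_Omega} literally says.)

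Next, the K\"ahler--Einstein case with non-zero scalar curvature. Here $\bar\rho=\lambda\bar\omega$ with $\lambda\ne0$. The ``$\Leftarrow$'' direction: if $\iota$ is minimal Lagrangian then on Lagrangians $\Vol_J$ and $\Vol_g$ agree to first order, so $H=0$ implies $H_J=0$, hence $\iota$ is $J$-minimal. For ``$\Rightarrow$'': assume $\xi_J[\iota]=0$. Then $\iota^*\bar\rho=\d\xi_J[\iota]=0$, and since $\bar\rho=\lambda\bar\omega$ with $\lambda\ne0$ this gives $\iota^*\bar\omega=0$, i.e.\ $\iota$ is Lagrangian. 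On a Lagrangian, $\rho_J\equiv1$, so $\vol_J[\iota]=\vol_g[\iota]$ and the two functionals coincide not just to first order; thus $H_J=0$ forces $H=0$, so $\iota$ is minimal Lagrangian. For a self-contained treatment I would instead cite \cite[Proposition 5.3]{LPcoupled} for this converse, as the excerpt already does.

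Finally, the definite Ricci case: this is exactly the argument already sketched in the paragraph preceding the theorem, which I would reproduce. If $\pm\overline{\Ric}$ is a metric then $\bar\rho$ is a nondegenerate closed $(1,1)$-form, hence symplectic; from $\xi_J[\iota]=0$ and \eqref{eq:dxi=ric} combined with \eqref{P.rho.eq} we get $\iota^*\bar\rho=\d\xi_J[\iota]=0$, so $\iota$ is $\bar\rho$-Lagrangian. The main obstacle, and the only place requiring genuine care rather than bookkeeping, is the tangential-versus-full-parallelism point in the fourth equivalence: one must make sure the statement ``$\Omega_J[\iota]$ is parallel'' is interpreted consistently with \eqref{eq:nabla_Omega} (which only controls tangential derivatives) and, if the full statement is intended, that the $J$-normal derivative formula indeed collapses when $\xi_J=0$ — which it does, since $\xi_J=0$ is equivalent to $H_J=0$ and the first variation of $\Vol_J$ then forces the coefficient $\Div(\rho_J X)/\rho_J$ to integrate to zero against all $X$, and in fact pointwise by the pointwise nature of \eqref{eq:xi=H}. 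Everything else is a direct application of the formulae \eqref{eq:nabla_Omega}, \eqref{eq:dxi=ric}, \eqref{eq:xi=H} and \eqref{P.rho.eq}.
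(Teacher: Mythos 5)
Your overall route is the same as the paper's: the theorem is a summary of \eqref{eq:nabla_Omega}, \eqref{eq:xi=H}, \eqref{eq:dxi=ric} and \eqref{P.rho.eq}, together with the citation of \cite[Proposition 5.3]{LPcoupled} for the K\"ahler--Einstein converse, and your treatment of the definite-Ricci statement and of the KE case follows the same paragraph of argument that precedes the theorem in the paper.

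The one genuine misstep is your handling of ``$\Omega_J[\iota]$ is parallel''. You assert that $\xi_J[\iota]=0$ (equivalently $H_J[\iota]=0$) forces the coefficient $\Div(\rho_JX)/\rho_J$ to vanish, ``in fact pointwise''. This is false: on a minimal Lagrangian in a negative KE manifold one has $\rho_J\equiv1$, and $\Div(X)$ is certainly not zero for every tangent field $X$; the first variation of $\Vol_J$ only controls $\int_L\xi_J[\iota](X)\,\vol_J[\iota]$ and says nothing about $\Div(\rho_JX)$, and \eqref{eq:xi=H} relates $H_J$ to $\xi_J$, not to the divergence term. The point is that no such argument is needed: $\Omega_J[\iota]$ is a section of the line bundle $K_M[\iota]=\iota^*K_M$ \emph{over $L$}, so the pulled-back connection only differentiates it in directions tangent to $L$, and ``parallel'' means exactly $\widetilde\nabla_X\Omega_J[\iota]=0$ for all $X\in TL$, which by \eqref{eq:nabla_Omega} is precisely $\xi_J[\iota]=0$. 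The formula $\widetilde\nabla_{JX}\Omega_J[\iota]=-i\,\rho_J^{-1}\Div(\rho_JX)\cdot\Omega_J[\iota]$ is not a covariant derivative of a fixed section in a normal direction; it is the derivative of the family $\Omega_J[\iota_t]$ under a variation of the immersion, and it has no reason to vanish at a $J$-minimal $\iota$. So what you offer only as an ``alternative'' reading (parallel as a section over $L$) is the intended, and indeed the only meaningful, statement. A smaller looseness: in the KE case, $H_J=0\Rightarrow H=0$ does not follow merely from $\Vol_J=\Vol_g$ on the (non-open) set of Lagrangians; it uses the first-order agreement of the two functionals at a Lagrangian (equality of gradients), i.e.\ exactly the fact from \cite{LPcoupled} that you already invoked in the other direction, so this is only a matter of phrasing.
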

In \cite{LPtotally}, when $M$ is negative K\"ahler--Einstein, we obtain a further characterization of minimal Lagrangians as the zero set of a moment map.

Notice from Theorem \ref{thm:minlag_is_Jmin} that, in the KE case, $H_J$ offers simultaneous control over both the minimal and the Lagrangian condition, which need to be studied separately if one works with the standard mean curvature $H$.

\section{The totally real Grassmannian}\label{s:grassmannian}

Let $M$ be a Hermitian manifold. In general $M$ does not admit a global complex volume form, \textit{i.e.}~the canonical bundle $K_M$ does not admit smooth non-zero sections: 
 this would imply that $K_M$ is differentiably trivial, thus $c_1(M)=0$.

It is thus an interesting fact that $K_M$ is trivial when restricted to totally real submanifolds.

We can trivialize $K_M$ globally by lifting it to a Grassmannian bundle. Specifically, consider the Grassmannian $TR^+(M)$ of oriented totally real $n$-planes in $TM$. Using the projection $q:TR^+(M)\to M$ we can pull back the bundle $K_M$ together with its Hermitian metric and connection $\tnabla$. The corresponding curvature of this connection is then the 
pull-back tensor $q^*(\frac{i}{2}\widetilde{P})$. We thus obtain a complex line bundle over $TR^+(M)$ which, by the same construction as above, admits a global
canonical section $\Omega$. Specifically, given a totally real plane $\pi\in TR^+(M)$ and a positive orthonormal basis $e_1,\ldots,e_n$ for $\pi$, we define $\Omega(\pi)$ as in \eqref{eq:canonical_vol}. 
In particular, $q^*K_M$ is differentiably trivial for any $M$.

We now want to develop a formula for the derivatives of $\Omega$, analogous to Proposition \ref{prop:diff_Omega}.
Let $\pi=\pi(t)$ be a curve in $TR^+(M)$ with $\dot\pi(0)=\hat{Z}\in T_{\pi(0)}TR^+(M)$. Let $\gamma(t):=q\circ\pi(t)$ and $Z:=q_*\hat{Z}$ be the corresponding data on $M$. By definition of pull-back, $q^*(K_M)_{|\pi(t)}$ 
can be canonically identified with $K_{M|\gamma(t)}$; under this identification, $\Omega(t)$ corresponds to a section $\Omega_J(t)$ of $K_M$ defined along $\gamma(t)$.

With this notation, by definition of the pull-back connection, we have
\begin{equation}\label{eq:same_nabla}
\tnabla_{\hat{Z}}\Omega=\tnabla_Z\Omega_J.
\end{equation}

Let us define a $1$-form $\Xi$ on $TR^+(M)$ as follows: 
\begin{equation}\label{Xi.eq}
\Xi_{\pi}(\hat{Z}):=g(J\pi_J\tnabla_Ze_i,e_i),
\end{equation}
 where $\pi_J$ is the projection defined by $\pi$ and the $e_i$ define an ON basis of $\pi$. It follows from (\ref{eq:same_nabla}) and from Proposition \ref{prop:diff_Omega} that
\begin{equation}\label{Xi.conn.eq}
\tnabla\Omega=i\,\Xi\otimes\Omega,
\end{equation}
so that $\d\Xi=q^*\left(\frac{1}{2}\widetilde{P}\right)$. If $\hZ$, $\hW$ are vector fields on $TR^+(M)$ with projections $Z:=q_*(\hZ), W:=q_*(\hW)$ in $TM$, a standard formula for the differential of a $1$-form then shows that
\begin{equation*}
 \hZ(\Xi(\hW))-\hW(\Xi(\hZ))-\Xi([\hZ,\hW])=\d\Xi(\hZ,\hW)=q^*\left(\frac{1}{2}\widetilde{P}(Z,W)\right).
\end{equation*}

We can relate $\Xi$ and $\Omega$ to the corresponding data on totally real submanifolds as follows. Let $\iota:L\to M$ be totally real. Let 
\begin{equation*}
\hiota:L\to TR^+(M),\ \ p\mapsto \iota_*(T_pL)                                                                                                   
\end{equation*}
denote the Gauss map of $\iota$, so that $\iota=q\circ\hiota$.  Then $\Omega_J[\iota]=\Omega\circ\hiota$ and $\xi_J[\iota]=\hiota^*\Xi$.

\begin{remark}\label{rem:Iorder}
 Notice that $\hiota$ is of first order with respect to $\iota$. Likewise, given a curve of immersions $\iota_t$ and the vector field $Z:=\frac{\partial}{\partial t}\iota$, 
the lifted vector field $\hZ=\frac{\partial}{\partial t}\hiota$ is of first order with respect to $Z$.
\end{remark}

\section{Linearisation of the Maslov map}\label{s:linearisation}

Fix an initial totally real immersion $\bar{\iota}:L\to M$ into a Hermitian manifold $M$. Let $\mathcal{P}$ be the space of totally real immersions $\iota$ of $L$ in $M$ isotopic to $\bar{\iota}$. Formally, 
 this is an infinite-dimensional manifold with tangent space $T_{\iota}\mathcal{P}=\Lambda^0(\iota^*TM)$. The term $T_pL$ in the splitting \eqref{eq:splitting} corresponds to the infinitesimal action determined by reparametrisation. 
 Thus, $\mathcal{P}$ can be viewed as a $\mbox{Diff}(L)$-principal fibre bundle over the quotient space $\mathcal{T}$ of non-parametrised totally real submanifolds in $M$ isotopic to $\bar{\iota}(L)$.

Consider the \textit{Maslov map} from $\mathcal{P}$ to the 1-forms $\Lambda^1(L)$ on $L$, 
\begin{equation}\label{eq:Maslov.map}
\xi_J:\mathcal{P}\to \Lambda^1(L)
\end{equation} 
given by $\iota\mapsto \xi_J[\iota]$. Our first goal is to calculate its differential.

Let $D\xi_{J|\iota}:T_\iota\mathcal{P}\to\Lambda^1(L)$ denote the differential of the Maslov map at the point $\iota\in\mathcal{P}$ and let $Z$ denote a vector in $T_\iota\mathcal{P}$. 
As seen above we can view $Z$ as a section of $\iota^*(TM)$, \textit{i.e.}~as a vector field on $M$ defined along the submanifold $\iota(L)$. Let $\iota_t$ be a curve of totally real immersions with $\iota_0=\iota$ 
 such that $\frac{\partial}{\partial t}\iota_{|t=0}=Z$.  By definition
\begin{equation}\label{eq:Dxi}
D\xi_{J|\iota}(Z)=\frac{d}{dt}(\xi_J[\iota_t])_{|t=0},
\end{equation}
where at each point $p\in L$ we are differentiating the curve $\xi_J[\iota_t]_{|p}$ in the fixed vector space $T_p^*L$.

Using the notation of Section \ref{s:grassmannian} we can  calculate the right-hand side of \eqref{eq:Dxi} by lifting it into $TR^+(M)$:
\begin{align}
 D\xi_{J|\iota}(Z)&=\frac{d}{dt}\,(\hiota_t^*\Xi)_{|t=0}\nonumber\\
 &=\hiota^*(\mathcal{L}_{\hZ}\Xi)\nonumber\\
 &=\hiota^*(\d(\hZ\lrcorner\Xi)+\hZ\lrcorner \d\Xi)\nonumber\\
 &=\hiota^*\d(\Xi(\hZ))+\frac{1}{2}(q\circ\hiota)^*\widetilde{P}(Z,\cdot).
 \label{eq:Dxi.2}
 \end{align}

Notice that using the splitting \eqref{eq:splitting} of $TM$ we can write $Z=X+JY$, where $X,Y\in \Lambda^0(TL)$. As usual this relies on identifications: more precisely, $Z=\iota_*(X)+J\iota_*(Y)$. 

If the transverse component $JY$ of $Z$ vanishes, \textit{i.e.}~$Z=X$, then by commuting $\hiota^*$ and the exterior derivative $\d$ we see that \eqref{eq:Dxi.2} gives
\begin{equation}\label{eq:Dxi.vert}
D\xi_{J|\iota}(X)=\d(\xi_J[\iota](X))+\frac{1}{2}\iota^*\widetilde{P}(X,\cdot).
\end{equation}
We could have alternatively found this formula directly from   \eqref{eq:Dxi} by noticing that $X$ is ``vertical'' with respect to the $\mbox{Diff}(L)$-action on $\mathcal{P}$: 
if we integrate $X$ to a flow $\phi_t$ on $L$ and set $\iota_t:=\iota\circ\phi_t$ then, using the fact that $\xi_J[\iota_t]=\phi_t^*(\xi_J[\iota])$ and \eqref{eq:dxi=ric}, we obtain \eqref{eq:Dxi.vert}.

In particular, if $\xi_J[\iota]=0$ then $\d\xi_J[\iota]=\frac{1}{2}\iota^*\widetilde{P}=0$ so $D\xi_{J|\iota}(X)=0$: this is a manifestation of the $\mbox{Diff}(L)$-invariance of the condition $\xi_J=0$.

We now consider the case $Z=JY$ and obtain the following.

\begin{lem}\label{diff.xi.prop}
Assume $M$ is almost K\"ahler. Let $X,Y$ be tangent vector fields on a totally real submanifold $\iota:L\to M$ and let $\iota_t:L\to M$ be a family of totally real immersions such that $\iota_0=\iota$ 
and $\frac{\partial}{\partial t}{\iota_t}_{|t=0}=JY$.  Then
\begin{align*}
D\xi_{J|\iota}(JY)(X)&=-\tnabla_X\left(\frac{\Div(\rho_JY)}{\rho_J}-\bar{g}(\pi_L\widetilde{T}(Y,e_j),e_j)\right)+\frac{1}{2}\widetilde{P}(JY,X).
\end{align*}
When $M$ is K\"ahler we have:
\begin{align*}
D\xi_{J|\iota}(JY)(X)&=-\overline{\nabla}_X\left(\frac{\Div(\rho_JY)}{\rho_J}\right)-\overline{\Ric}(X,Y).
\end{align*}
\end{lem}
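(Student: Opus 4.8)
The starting point is the general formula \eqref{eq:Dxi.2} for $D\xi_{J|\iota}(Z)$, specialised to $Z=JY$. The second term there is already $\tfrac12\iota^*\widetilde P(JY,\cdot)$, which accounts for the final summand in both displayed formulas, so the entire task is to evaluate the first term $\hiota^*\d\bigl(\Xi(\hZ)\bigr)$, i.e. to identify the function $\Xi(\hZ)$ pulled back to $L$. By definition \eqref{Xi.eq}, $\Xi_{\pi}(\hZ)=g(J\pi_J\tnabla_Z e_i,e_i)$ evaluated along $\hiota$, where $\hZ$ is the lift of $Z=JY$ along the Gauss map $\hiota$, and the $e_i$ are a positive ON frame of the moving planes. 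So the key identity I need is precisely
\[
\hiota^*\bigl(\Xi(\hZ)\bigr)=g(J\pi_J\tnabla_{JY}e_i,e_i),
\]
for the frame coming from the curve $\iota_t$, and then I quote the formula from \cite[Proposition 5.3]{LPtotally} stated in the excerpt, namely
\[
g(J\pi_J\tnabla_{JY}e_i,e_i)=-\frac{\Div(\rho_J Y)}{\rho_J}+2\,g(J\pi_J\widetilde T(JY,e_i),e_i).
\]
Thus $\hiota^*\d(\Xi(\hZ))=\d\bigl(-\tfrac{\Div(\rho_JY)}{\rho_J}+2g(J\pi_J\widetilde T(JY,e_i),e_i)\bigr)$, and evaluating this $1$-form on $X$ and combining with the $\widetilde P$ term gives the almost-K\"ahler formula, once the torsion term $2g(J\pi_J\widetilde T(JY,e_i),e_i)$ is rewritten as $\bar g(\pi_L\widetilde T(Y,e_j),e_j)$; this last rewriting uses that $\widetilde T$ is of type $(1,1)$ for the Chern connection together with \eqref{eq:proj.commute} and the compatibility of $\widetilde T$ with $J$, plus $JY=e_j\otimes(\text{something})$-type bookkeeping. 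I would double-check the factor of $2$ and the sign here; this algebraic identity for the torsion contraction is the one genuinely fiddly point.

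For the K\"ahler specialisation, the torsion vanishes, so $\hiota^*(\Xi(\hZ))=-\Div(\rho_JY)/\rho_J$ and
\[
D\xi_{J|\iota}(JY)(X)=-\tnabla_X\!\left(\frac{\Div(\rho_JY)}{\rho_J}\right)+\frac12\widetilde P(JY,X).
\]
It then remains to show $\tfrac12\widetilde P(JY,X)=-\overline{\Ric}(X,Y)$. By \eqref{P.rho.eq}, $\tfrac12\widetilde P=\bar\rho$, and by definition $\bar\rho(\cdot,\cdot)=\overline{\Ric}(J\cdot,\cdot)$, so $\tfrac12\widetilde P(JY,X)=\overline{\Ric}(J(JY),X)=\overline{\Ric}(-Y,X)=-\overline{\Ric}(X,Y)$, using $J^2=-1$ and symmetry of $\Ric$. (On the left, $\tnabla_X=\overline\nabla_X$ since the Chern connection is Levi-Civita in the K\"ahler case.) This is immediate and carries no obstacle.

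Two subordinate points need care. First, in \eqref{eq:Dxi.2} the Lie-derivative manipulation $\mathcal L_{\hZ}\Xi=\d(\hZ\lrcorner\Xi)+\hZ\lrcorner\,\d\Xi$ requires $\hZ$ to be an honest vector field on $TR^+(M)$ extending the lifted velocity; this is handled exactly as in the derivation preceding the Lemma in the excerpt, so I would simply invoke that. Second, and more substantively, I must make sure that the frame $e_i(t)$ implicit in evaluating $\Xi(\hZ)$ along $\hiota_t$ is one to which the cited formula from \cite{LPtotally} applies — i.e. a positive $g(t)$-ON frame of $\iota_{t*}(T_pL)$ — and that the quantity $g(J\pi_J\tnabla_{JY}e_i,e_i)$ is independent of this choice; the latter is exactly the content of the Lemma proved just after Proposition \ref{prop:diff_Omega}, so there is no ambiguity.

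\textbf{Main obstacle.} The only real work is the torsion-term identity $2\,g(J\pi_J\widetilde T(JY,e_i),e_i)=\bar g(\pi_L\widetilde T(Y,e_j),e_j)$ in the almost-K\"ahler case: getting the projections, the $J$'s, the type-$(1,1)$ condition on $\widetilde T$, and the numerical factor all consistent. Everything else is either a direct quotation of results already in the excerpt (Proposition \ref{prop:diff_Omega}, the $\Xi$-formalism of Section \ref{s:grassmannian}, \cite[Prop. 5.3]{LPtotally}) or elementary tensor algebra with $\bar\rho$, $\overline{\Ric}$ and $J$.
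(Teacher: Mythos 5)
Your skeleton---specialise \eqref{eq:Dxi.2} to $Z=JY$, evaluate the function $\Xi(\hZ)$ along $\hiota$, and convert $\tfrac12\widetilde P(JY,X)$ into $-\overline{\Ric}(X,Y)$ via \eqref{P.rho.eq}---is exactly the paper's, and your K\"ahler case is complete. The gap is the step you yourself flag. You propose to obtain $\Xi(\hZ)$ by quoting the Section \ref{s:cpxvol} formula $g(J\pi_J\tnabla_{JY}e_i,e_i)=-\Div(\rho_JY)/\rho_J+2g(J\pi_J\widetilde T(JY,e_i),e_i)$ and then ``rewriting'' its torsion term as $\bar g(\pi_L\widetilde T(Y,e_j),e_j)$. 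But the very identities you invoke perform that rewriting and give the wrong coefficient: from $\widetilde T(JY,e_i)=-J\widetilde T(Y,e_i)$ and $\pi_J\circ J=J\circ\pi_L$ (so $J\pi_JJ=-\pi_L$) one gets $g(J\pi_J\widetilde T(JY,e_i),e_i)=\bar g(\pi_L\widetilde T(Y,e_i),e_i)$, so the quoted formula yields $2\,\bar g(\pi_L\widetilde T(Y,e_j),e_j)$, not the single copy in the statement. Hence in the almost-K\"ahler case your derivation does not close unless the contraction $\bar g(\pi_L\widetilde T(Y,e_j),e_j)$ happens to vanish, which you have not shown; the factor of $2$ you promised to ``double-check'' is precisely where the argument breaks, and it cannot be repaired by pure algebra on the torsion term---the black-box use of that quoted formula has to be abandoned or re-examined (in particular, one must check that the divergence appearing there is literally the same quantity once torsion is present).

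The paper's proof does not pass through that formula: it evaluates $\Xi(\hZ)$ from scratch at first order. Extend the $e_j$ to local fields $v_j$ on $L$ and set $w_j=\iota_{t*}v_j$, so $[JY,w_j]=0$ at $t=0$; then $\tnabla_{JY}w_j=\tnabla_{w_j}(JY)+\widetilde T(JY,w_j)$, and using $\tnabla J=0$, $\widetilde T(JY,w_j)=-J\widetilde T(Y,w_j)$ and \eqref{eq:proj.commute} one finds $\bar g(J\pi_J\tnabla_{JY}w_j,w_j)=-\bar g(\pi_L\tnabla_{w_j}Y,w_j)+\bar g(\pi_L\widetilde T(Y,w_j),w_j)$; only at this point is \cite{LPtotally} invoked, to identify $\bar g(\pi_L\tnabla_{e_j}Y,e_j)$ with $\Div(\rho_JY)/\rho_J$. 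Substituting this direct computation for your quotation fixes the almost-K\"ahler case and also settles the frame-choice point you raise, since the correction from replacing $w_j$ by an orthonormal frame is killed at $t=0$ by $\pi_Jw_j=0$. Your K\"ahler specialisation is unaffected, since there the torsion term drops out either way and $\tnabla=\overline\nabla$.
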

\begin{proof}  Let $Z=JY$.
From \eqref{Xi.eq} we know that, at $p\in L$, $\Xi(\hZ)=g(J\pi_J\tnabla_{JY}e_j,e_j)$ where $e_1,\ldots,e_n$ is an orthonormal basis for $T_pL$.

Let $v_j$ for $j=1,\ldots,n$ be any smooth local vector fields on $L$ extending the vectors $e_j$ and set $w_j=\iota_{t*}(v_j)$. Then $[\frac{\partial}{\partial t},v_j]=0$ locally on $L\times\R$, so at $t=0$ 
 we have $[JY,w_j]=\iota_*[\frac{\partial}{\partial t}, v_j]=0$.
Then, using \eqref{eq:proj.commute} and the fact that $\widetilde{T}(JY,w_j)=-J\widetilde{T}(Y,w_j)$ for the Chern connection (see, for example, \cite[Section 4.3]{LPcoupled}), we have at $t=0$:
\begin{align}
 \bar{g}(J\pi_J\tnabla_{JY}w_j,w_j)&=\bar{g}(J\pi_J(\tnabla_{w_j}JY+\widetilde{T}(JY,w_j)),w_j)\nonumber\\
 &=\bar{g}(J\pi_{J}J\tnabla_{w_j}Y,w_j)-\bar{g}(J\pi_JJ\widetilde{T}(Y,w_j),w_j)\nonumber\\
 &=-\bar{g}(\pi_L\tnabla_{w_j}Y,w_j)+\bar{g}(\pi_L\widetilde{T}(Y,w_j),w_j).\label{Xi.JY.eq0}
\end{align}
Evaluating \eqref{Xi.JY.eq0} at $p$ and using calculations in \cite[Proposition 5.3]{LPtotally} we find
\begin{equation}
\Xi(\hZ)=-\frac{\Div(\rho_JY)}{\rho_J}+\bar{g}(\pi_L\widetilde{T}(Y,e_j),e_j)
.\label{Xi.JY.eq}
\end{equation}
We emphasize that both sides of \eqref{Xi.JY.eq} are of first order with respect to $Y$, cf.~Remark \ref{rem:Iorder}. The first result follows from \eqref{eq:Dxi.2}.

In the K\"ahler case, $\tnabla=\overline{\nabla}$ so the torsion term vanishes and using \eqref{P.rho.eq} we have
$$\frac{1}{2}\widetilde{P}(JY,X)=\overline{\Ric}(J(JY),X)=-\overline{\Ric}(Y,X)=-\overline{\Ric}(X,Y),$$
completing the proof.
\end{proof}

In particular we have proved the following.
\begin{prop} \label{lin.xi.prop}
Assume $M$ is K\"ahler, $\iota:L\to M$ is totally real and $\xi_J[\iota]=0$. The linearisation of the Maslov map \eqref{eq:Maslov.map} 
at $\iota$ is given by
$$D\xi_{J|\iota}(X+JY)=\d(\rho_J^{-1}\d^*(\rho_JY^{\flat}))-\iota^*\overline{\Ric}(Y,.)$$
for $X,Y\in\Lambda^0(TL)$.
In particular $\{X\in \Lambda^0(TL)\}\subseteq\ker (D\xi_{J|\iota})$.  
\end{prop}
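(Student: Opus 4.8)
The plan is to derive Proposition \ref{lin.xi.prop} as a direct consequence of Lemma \ref{diff.xi.prop} together with formula \eqref{eq:Dxi.vert}, specialised to the case $\xi_J[\iota]=0$. First I would split the tangent vector $Z=X+JY$ using the decomposition \eqref{eq:splitting} and use linearity of $D\xi_{J|\iota}$ to treat the two summands separately. For the horizontal (reparametrisation) part $X$, I would invoke \eqref{eq:Dxi.vert}: since $\xi_J[\iota]=0$ we have $\xi_J[\iota](X)=0$, and since $\d\xi_J[\iota]=\tfrac12\iota^*\widetilde P=0$ by \eqref{eq:dxi=ric}, both terms in \eqref{eq:Dxi.vert} vanish, giving $D\xi_{J|\iota}(X)=0$; this simultaneously proves the final sentence of the Proposition, namely $\{X\in\Lambda^0(TL)\}\subseteq\ker(D\xi_{J|\iota})$.

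For the transverse part $JY$, I would apply the K\"ahler case of Lemma \ref{diff.xi.prop}, which reads
\[
D\xi_{J|\iota}(JY)(X)=-\overline{\nabla}_X\!\left(\frac{\Div(\rho_JY)}{\rho_J}\right)-\overline{\Ric}(X,Y).
\]
The remaining task is purely notational: rewrite this $1$-form on $L$ (as a function of the test field $X$) in the compact codifferential form stated in the Proposition. The key identifications are $\d^*(\rho_JY^{\flat})=-\Div(\rho_JY)$ (the standard relation between the codifferential on $1$-forms and the divergence of the dual vector field, up to the usual sign convention on a Riemannian manifold), so that $\rho_J^{-1}\d^*(\rho_JY^{\flat})=-\Div(\rho_JY)/\rho_J$; then $\d$ of this function, paired against $X$, is exactly $\overline{\nabla}_X\big(\Div(\rho_JY)/\rho_J\big)$ up to sign, yielding the first term $\d(\rho_J^{-1}\d^*(\rho_JY^{\flat}))$. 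The term $-\overline{\Ric}(X,Y)$ is rewritten as $-(\iota^*\overline{\Ric}(Y,\cdot))(X)$, i.e. the restriction of the Ricci tensor to $L$ viewed as a $1$-form in the $X$ slot. Summing the two contributions gives the displayed formula for $D\xi_{J|\iota}(X+JY)$.

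I do not expect any genuine obstacle here: all the analytic content is already packaged in Lemma \ref{diff.xi.prop}, and the Proposition is essentially a restatement in invariant notation plus the observation about the kernel. The only point requiring a little care is bookkeeping of sign conventions for $\d^*$ versus $\Div$, and making sure the "$Y\mapsto \rho_J^{-1}\d^*(\rho_JY^\flat)$" expression is interpreted as a function on $L$ before applying $\d$; I would state the convention explicitly to avoid ambiguity. One might also remark that the operator $Y\mapsto \rho_J^{-1}\d^*(\rho_JY^\flat)$ is (up to the metric identification $TL\simeq T^*L$) a weighted divergence, so the principal part of $D\xi_{J|\iota}$ on the transverse directions is a second-order operator of gradient-of-divergence type — relevant for the ellipticity discussion that presumably follows in Section \ref{s:Lag.KE} — but this is a comment rather than part of the proof.
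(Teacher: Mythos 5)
Your proposal is correct and follows essentially the same route as the paper: the tangential directions are handled by \eqref{eq:Dxi.vert} together with $\xi_J[\iota]=0$ and \eqref{eq:dxi=ric}, and the transverse directions by the K\"ahler case of Lemma \ref{diff.xi.prop}, with the stated formula obtained from the sign convention $\d^*(\rho_JY^{\flat})=-\Div(\rho_JY)$ and the symmetry of $\overline{\Ric}$. The paper records exactly this deduction (``In particular we have proved the following''), so no further comment is needed.
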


\begin{remark}\label{rem:convexity}
In order to link these results on K\"ahler manifolds to the second variation formula obtained in \cite[Proposition 5.8]{LPtotally}, let $\iota_t$ be a curve of immersions such that $\iota_0=\iota$ and $\frac{\partial\iota}{\partial t}=JY$, for some vector field $Y=Y(t)$ on $L$. Then
\begin{align}
\frac{d}{dt}\Vol_J(\iota_t)&=\int_L\frac{\partial}{\partial t}\vol_J[\iota_t]\nonumber\\
&=-\int_L\bar g(H_J[\iota_t],JY)\vol_J[\iota_t]=\int_L\xi_J[\iota_t](Y)\vol_J[\iota_t],\label{firstvar.eq}\\
\frac{d^2}{dt^2}\Vol_J(\iota_t)=&\int_L\left(\frac{\partial}{\partial t}\xi_J[\iota_t]\right)(Y)\vol_J[\iota_t]+\int_L\xi_J[\iota_t]\left(\frac{\partial}{\partial t}Y\right)\vol_J[\iota_t]\nonumber\\
&+\int_L\xi_J[\iota_t](Y)\frac{\partial}{\partial t}\vol_J[\iota_t].\label{secondvar.eq}
\end{align}
Let us examine the three terms on the right-hand side of \eqref{secondvar.eq}. Using Lemma \ref{diff.xi.prop} one can show that the first term coincides with
$$\int_LD\xi_J(Y)\vol_J=\int_L\left(\frac{\Div(\rho_JY)}{\rho_J}\right)^2\vol_J-\int_L\overline{\Ric}(Y,Y)\vol_J.$$
The second term can be identified with 
$$\int_L\xi_J(\pi_L[JY,Y])\vol_J=-\int_L\bar g(H_J,\pi_JJ[JY,Y])\vol_J.$$
From \eqref{firstvar.eq}, we see that the third term is
\begin{align*}
 \int_L\big(\xi_J(Y)\big)^2\vol_J.
\end{align*}
 When $\overline{\Ric}\leq 0$ all terms are non-negative except the second, which however vanishes when $\iota_t$ is a ``geodesic'' in the sense of \cite{LPtotally}. This gives a new proof of the convexity result \cite[Theorem 5.10]{LPtotally}, also showing the efficiency of the techniques introduced in this paper.

\subsection{Restriction to Lagrangian deformations}\label{ss:weinstein}
\end{remark}

Let $M$ be a K\"ahler manifold with definite Ricci tensor. Theorem \ref{thm:minlag_is_Jmin} shows that any $J$-minimal submanifold is Lagrangian with respect to the appropriate symplectic form. This is a strong condition, 
 with notable consequences. We are particularly interested in the \textit{Lagrangian neighbourhood theorem} applied to any initial compact Lagrangian immersion $\iota:L\to M$ in any symplectic manifold, as follows. 
To simplify notation we will identify $L$ with its image $\iota(L)$.

Recall that $N:=T^*L$ admits a standard symplectic structure such that Lagrangian sections are precisely the graphs of closed 1-forms. The Lagrangian neighbourhood theorem states that there is an open neighbourhood $V$ of the zero section in $N$, a tubular neighbourhood $T$ of $L$ in $M$ 
and a symplectomorphism  $\Phi:V\to T$ which is the identity on $L$. This construction depends on an initial choice of a Lagrangian distribution $E\leq TM_{|L}$, transverse to $TL$: one then obtains that, for each $p\in L$, the differential map 
\begin{equation}\label{eq:weinstein}
 \d\Phi_p: T_pN= T_pL\oplus T^*_pL\to T_pM=T_pL\oplus E_p
\end{equation}
provides an isomorphism between the corresponding subspaces in this splitting. Notice that each such subspace is Lagrangian. 

The main application of this theorem is the following. Let $\alpha\in \Lambda^1(L)$ be  such that $\Graph(\alpha)\subseteq V$ and let $\iota_{\alpha}:=\Phi\circ\alpha:L\to M$ denote the corresponding immersion. Then $\iota_{\alpha}:L\to M$ is Lagrangian if and only if $\d\alpha=0$. Notice that this result allows us (i) to ``gauge-fix'' the space of 
Lagrangian immersions, \textit{i.e.}~to locally eliminate the role of the reparametrization group $\Diff(L)$, by providing a canonical parametrization of the nearby Lagrangian submanifolds, and (ii) to ``linearize'' this space of Lagrangian submanifolds via the vector space of closed 1-forms.

We can apply the Lagrangian neighbourhood theorem to any Lagrangian immersion in the symplectic manifold $(M,\bar\rho)$ defined by the Ricci 2-form. To this end we choose the Lagrangian distribution $E_p:=J(T_pL)$. For any $k\geq 0$ and $a\in(0,1)$ let 
\begin{equation}\label{closed.forms.eq}
\mathcal{Z}^{k+2,a}=\{\alpha\in C^{k+2,a}(T^*L)\,:\,\d\alpha=0\}
\end{equation}
denote the closed 1-forms on $L$ in $C^{k+2,a}$. Let $\mathcal{U}$ be the subset of forms in $\mathcal{Z}^{k+2,a}$ whose graph lies in $V$: these forms parametrize the $\bar\rho$-Lagrangians near $\iota(L)$ which have the corresponding degree of smoothness. When $M$ is KE (with non-zero scalar curvature) we recover the usual Lagrangian submanifolds, defined with respect to $\bar\omega$.

Consider the restricted Maslov map
\begin{equation}\label{F.eq}
F:\mathcal{U}\to \mathcal{Z}^{k,a},\ \ F(\alpha)=\xi_J[\iota_{\alpha}].
\end{equation}
This map is well-defined between these spaces because $\xi_J$ is second order in $\alpha$ and  each form $F(\alpha)$ is closed: 
$\d\xi_J[\iota_{\alpha}]=\iota_{\alpha}^*\overline{\rho}=0$, where we use the fact that $\iota_\alpha$ is $\bar\rho$-Lagrangian.

We now notice the following fact.

\begin{lem}\label{l:exact}
 Assume $\xi_J[\iota]$ is exact. In terms of the Lagrangian neighbourhood theorem applied to $\iota$, if $\alpha$ is exact then $\xi_J[\iota_\alpha]$ is exact.
\end{lem}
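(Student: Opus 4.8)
The plan is to use the first-order variational structure: deforming the immersion $\iota$ to $\iota_\alpha$ via a path $\iota_{s\alpha}$, $s\in[0,1]$, and track how $\xi_J$ changes. Writing $\alpha = \d f$ for some function $f$, the path $s\mapsto s\alpha$ has $\partial_s\iota_{s\alpha}$ equal (under the Lagrangian neighbourhood identification \eqref{eq:weinstein} with $E=J(TL)$) to the transverse vector field $J(\alpha^\sharp)$, i.e.\ $JY$ with $Y=\alpha^\sharp = \d f^\sharp$ up to a time-dependent metric. The key point is that, by Proposition \ref{lin.xi.prop} and Lemma \ref{diff.xi.prop}, the linearisation $D\xi_{J}(JY)$ lands in \emph{exact} 1-forms whenever $Y$ is a gradient: indeed $D\xi_{J|\iota}(JY) = \d(\rho_J^{-1}\d^*(\rho_J Y^\flat)) - \iota^*\overline{\Ric}(Y,\cdot)$, where the first term is visibly exact, so the obstruction to exactness of the output is entirely the term $\iota^*\overline{\Ric}(Y,\cdot)$.

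So the first step is to show $\iota^*\overline{\Ric}(Y,\cdot)$ is exact when $Y = \alpha^\sharp$ with $\alpha$ exact. Here I would use that $\iota$ is $\bar\rho$-Lagrangian (from the hypothesis, via Theorem \ref{thm:minlag_is_Jmin}, or just because we are working inside $\mathcal U$), so $\bar\rho$ restricted to $L$ vanishes: for tangent $X$, $\overline{\Ric}(X,Y) = \bar\rho(JX,Y) = -\bar\rho(X,JY)$, and since $JY$ is the normal component one can express $\iota^*\overline{\Ric}(Y,\cdot)$ in terms of the contraction of $\bar\rho$ with the normal field $JY$ along $L$. Because $\bar\rho$ is closed and $\iota^*\bar\rho = 0$, the 1-form $X\mapsto \bar\rho(JY, X)$ along $L$ is closed (this is just Cartan's formula: $\iota^*(\iota_{JY}\bar\rho) = \iota^*(\mathcal{L}_{JY}\bar\rho) - \d\,\iota^*(\iota_{JY}\bar\rho)$ combined with $\d\bar\rho = 0$ — more precisely, $\d\,\iota^*(\iota_{JY}\bar\rho) = \iota^*\mathcal{L}_{JY}\bar\rho = \iota^*(\iota_{JY}\d\bar\rho) + \iota^* \d(\iota_{JY}\bar\rho)$, forcing closedness). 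Then I invoke the hypothesis that $\xi_J[\iota]$ is exact, together with \eqref{eq:dxi=ric}, to pin down cohomology classes.

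The cleaner route, which I would actually carry out, is cohomological and avoids pointwise identities. Define $g(s) := [\xi_J[\iota_{s\alpha}]] \in H^1(L;\R)$, the de Rham class; each $\xi_J[\iota_{s\alpha}]$ is closed by the discussion after \eqref{F.eq}. Differentiating in $s$ and using Lemma \ref{diff.xi.prop} (K\"ahler case) shows $\frac{d}{ds}\xi_J[\iota_{s\alpha}]$ is cohomologous to $-\iota_{s\alpha}^*\overline{\Ric}(Y_s,\cdot)$, where $Y_s$ is the relevant tangent field. By the closedness argument above, $\iota_{s\alpha}^*\overline{\Ric}(Y_s,\cdot)$ is a closed 1-form; its class, I claim, is zero because $\alpha$ (hence the deformation field) is exact and the $\bar\rho$-Lagrangian condition makes the pairing of $\bar\rho$ with an exact normal direction trivial in cohomology — concretely, if $Y_s = \d f_s^{\sharp_s}$ then $\iota_{JY_s}\bar\rho$ restricted to $L$ differs from an exact form by $\iota^*\mathcal L_{JY_s}\bar\rho$-type terms which vanish on the $\bar\rho$-Lagrangian $L$. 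Hence $g'(s)=0$, so $g(1)=g(0)$. Since $\xi_J[\iota]$ is assumed exact, $g(0)=0$, giving $g(1)=0$, i.e.\ $\xi_J[\iota_\alpha]$ is exact.

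I expect the main obstacle to be making the cohomological vanishing of the class $[\iota_{s\alpha}^*\overline{\Ric}(Y_s,\cdot)]$ fully rigorous: one must be careful that along the path the submanifold $\iota_{s\alpha}(L)$ stays $\bar\rho$-Lagrangian (true since $\alpha\in\mathcal U$, so all $s\alpha\in\mathcal U$ by linearity and convexity of the closedness condition, modulo checking $\mathrm{Graph}(s\alpha)\subseteq V$ — which holds if $V$ is taken star-shaped, a harmless assumption), and that the identification of $\partial_s\iota_{s\alpha}$ with a normal vector field of the form $JY_s$ with $Y_s$ a \emph{gradient} is legitimate under \eqref{eq:weinstein}. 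Once those bookkeeping points are settled, the exactness of $\iota^*\overline{\Ric}(Y,\cdot)$ for gradient $Y$ on a $\bar\rho$-Lagrangian follows from Cartan's magic formula and $\d\bar\rho=0$, and the result drops out.
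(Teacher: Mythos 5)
Your overall strategy -- deform along the path $\iota_{s\alpha}$, show that the periods $\int_\gamma\xi_J[\iota_{s\alpha}]$ are constant in $s$, and conclude from exactness at $s=0$ -- is the same as the paper's, and reducing the problem (via Lemma \ref{diff.xi.prop} and \eqref{eq:Dxi.vert}) to the exactness of the Ricci term $\iota_{s\alpha}^*\overline{\Ric}(Y_s,\cdot)$ is correct. But that exactness is precisely the crux, and your justification of it does not work. Cartan's formula together with $\d\bar\rho=0$ and $\iota_{s\alpha}^*\bar\rho=0$ can at best give you \emph{closedness} of $\iota_{s\alpha}^*(JY_s\lrcorner\bar\rho)$ (and even that is not what your displayed manipulation shows: the chain $\d\,\iota^*(\iota_{JY}\bar\rho)=\iota^*\mathcal{L}_{JY}\bar\rho$ is an identity, and $\iota^*\mathcal{L}_{JY}\bar\rho$ does \emph{not} vanish merely because $L$ is $\bar\rho$-Lagrangian -- it vanishes only because the nearby graphs of closed forms are again $\bar\rho$-Lagrangian, which is a statement about the family, not about $L$ alone). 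Closedness, however, is already known, since $\d\xi_J[\iota_{s\alpha}]=\iota_{s\alpha}^*\bar\rho=0$; what you must produce is the vanishing of the de Rham class, and the sentence ``the $\bar\rho$-Lagrangian condition makes the pairing of $\bar\rho$ with an exact normal direction trivial in cohomology'' asserts exactly the point at issue without proving it.

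The missing ingredient is the pointwise identity, valid for every $s$, that the variation field $Z_s=\partial_s\iota_{s\alpha}$ satisfies $\iota_{s\alpha}^*(Z_s\lrcorner\bar\rho)=-\alpha=-\d f$: this follows from the fact that $\Phi$ is a symplectomorphism onto a neighbourhood of the zero section of $T^*L$ with its canonical structure, so that contracting the vertical field $\alpha$ into the canonical form and pulling back by the section $s\alpha$ gives $-\alpha$ (the tautological $1$-form computation; this is how the paper closes the argument). Granting this, the tangential part of $Z_s$ contributes nothing (it is $\bar\rho$-isotropic and its contribution to $D\xi_J$ is exact by \eqref{eq:Dxi.vert} plus $\iota_{s\alpha}^*\bar\rho=0$), and the transverse part gives $\iota_{s\alpha}^*\overline{\Ric}(Y_s,\cdot)=\pm\d f$ \emph{literally}, not just cohomologically -- note that your identification $Y=\alpha^\sharp$ via the metric $g$ is also off: by \eqref{eq:identify} the correct sharp is taken with respect to $\iota^*\overline{\Ric}$, and moreover \eqref{eq:weinstein} only identifies the vertical space with $J(TL)$ along the zero section, so for $s>0$ you cannot assume $Z_s$ is purely of the form $JY_s$. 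With the tautological-form identity inserted, your argument becomes correct and essentially coincides with the paper's proof; without it, the key step is circular.
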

\begin{proof}
 Assume $\alpha=\d f$. It suffices to prove that, for any closed curve $\gamma$ in $L$, $\int_\gamma\xi_J[\iota_\alpha]=0$. Consider the curve of immersions $\iota_t=\iota_{t\alpha}$. Then
 \begin{align*}
  \frac{d}{dt}\int_\gamma \xi_J[\iota_t]&=\int_\gamma\frac{d}{dt}\hat\iota_t^*\Xi=\int_\gamma\hat\iota_t^*\mathcal{L}_{\frac{\partial \hat\iota_t}{\partial t}}\Xi\\
  &=\int_\gamma\hat\iota_t^*\d(\frac{\partial \hat\iota_t}{\partial t}\lrcorner\Xi)+\int_\gamma\hat\iota_t^*(\frac{\partial \hat\iota_t}{\partial t}\lrcorner\d\Xi)\\
  &=\int_\gamma\hat\iota_t^*(\frac{\partial \hat\iota_t}{\partial t}\lrcorner q^*\bar\rho)=\int_\gamma\iota_t^*(\frac{\partial \iota_t}{\partial t}\lrcorner \bar\rho),
 \end{align*}
where we have used Stokes' theorem to cancel one term.

 The Lagrangian neighbourhood theorem allows us to identify $\bar\rho$ with the standard symplectic structure on $T^*L$, the immersion $\iota_t$ in $M$ with the immersion $(\mbox{Id},t\alpha)$ in $T^*L$ and $\frac{\partial \iota_t}{\partial t}$ with $\alpha$, viewed as a vertical vector field defined along the image of the immersion $(\mbox{Id},t\alpha)$.

The standard symplectic structure can be written locally as $dx^i\wedge dy^i$, so $\frac{\partial \iota_t}{\partial t}\lrcorner \bar\rho_{|\iota_t(x)}$ can be identified with $\alpha_k\partial y_k\lrcorner (dx^i\wedge dy^i)=-\alpha(x)$. Using $(\mbox{Id},t\alpha)$ this pulls back to $-\alpha=-\d f$.  (This calculation is essentially just the statement that the standard symplectic structure on $T^*L$ is $-\d\tau$ where $\tau$ is the tautological 1-form on $T^*L$.) Stokes' theorem allows us to conclude that $\frac{d}{dt}\int_\gamma \xi_J[\iota_t]=0$. For $t=0$, $\int_\gamma \xi_J[\iota]=0$ so we are done.
\end{proof}

It is convenient to introduce the notation 
$$C^{k,a}_0(L):=\{f\in C^{k,a}(L):\int_L f\vol_J[\iota]=0\}.$$
Set $\tU:=\{f\in C^{k+3,a}_0(L): \d f\in\mathcal{U}\}$. This parametrizes the exact $\bar\rho$-Lagrangian submanifolds in $M$ near $\iota$, allowing us to further restrict the Maslov map. 
With the hypothesis of Lemma \ref{l:exact} this map can be reduced to functions: given $f\in\tU$, let $\tilde{F}(f)$ denote the unique function with integral zero (where the integration is with respect to the volume form $\vol_J[\iota]$)
 whose differential is $\xi_J[\iota_{\d f}]$.  We may construct $\tilde{F}(f)$ explicitly by integration along paths in $L$: the definition is independent of the choice of path since $\xi_J[\iota_{\d f}]$ is exact.
   Since $\xi_J[\iota_{\d f}]\in C^{k,a}$,
by construction we then have that $\tilde{F}(f)\in C^{k+1,a}_0$.    
We thus obtain the \textit{scalar Maslov map}
\begin{equation}\label{F.eq_bis}
 \tilde{F}:\tU\to C^{k+1,a}_0(L),\ \ \d(\tilde{F}(f))=\xi_J[\iota_{\d f}].
\end{equation}

Now assume the initial immersion satisfies $\xi_J[\iota]=0$. We can then compute the linearisation $\mathcal{L}:=DF_{|0}$ by restricting the formula in Proposition \ref{lin.xi.prop} to the directions $JY$. Using (\ref{eq:weinstein}) we can identify $JY$ with a 1-form $\alpha$: specifically, one obtains the relationship
\begin{equation}\label{eq:identify}
 \alpha:=\bar\rho(JY,\cdot)=-\bar\rho(Y,J\cdot)=-\overline\Ric(Y,\cdot).
\end{equation}
Recall that any definite symmetric bilinear form on $TL$ provides \textit{musical isomorphisms} $\sharp$, $\flat$ between $T^*L$ and $TL$. We will write them in superscript when using the metric $g$, in subscript when using the restriction $\iota^*{\overline{\Ric}}$ of the ambient Ricci tensor. It follows from (\ref{eq:identify}) that $Y^\flat=-(\alpha_\sharp)^\flat$. 

Alternatively, let $A$ denote the $g$-self-adjoint operator on $T_pL$ defined by the identity $\iota^*\overline{\Ric}(\cdot,\cdot)=g(A\cdot,\cdot)$. Our assumptions on $M$ imply that $A$ is diagonalizable with 
non-zero eigenvalues, all with the same sign. 
 From the identities $\iota^*\overline{\Ric}(\alpha_\sharp,\cdot)=\alpha(\cdot)=g(\alpha^\sharp,\cdot)$ we deduce that $\alpha_\sharp=A^{-1}\alpha^\sharp$. It follows that $(\alpha_\sharp)^\flat=A^{-1*}\alpha$.

With these identifications we obtain
\begin{align}\label{L.eq}
\mathcal{L}(\alpha)=&-\d\big(\rho_J^{-1}\d^*(\rho_J(\alpha_\sharp)^\flat)\big)+\alpha\nonumber\\
=&-\d\big(\rho_J^{-1}\d^*(\rho_JA^{-1*}\alpha)\big)+\alpha.
\end{align}

Likewise, the linearisation $\tL:=D\tF_{|0}$ is 
\begin{align}\label{tL.eq}
\tL(f)=&-\rho_J^{-1}\d^*(\rho_J(\d f_\sharp)^\flat)+f\nonumber\\
=&-\rho_J^{-1}\d^*(\rho_JA^{-1*}\d f)+f,
\end{align}
since this satisfies $\d\tL(f)=\mathcal{L}(\d f)$ and $\int_L\tL(f)\vol_J[\iota]=0$.

When $M$ is K\"ahler--Einstein with $\overline{\Ric}=\lambda \bar g$ these formulae simplify because $\lambda(\alpha_\sharp)^\flat=\alpha$. For example, (\ref{tL.eq}) becomes $\tL(f)=-\lambda^{-1}\Delta_g f+f,$
  where $\Delta_g:=\d^*\d$ is the usual Hodge Laplacian.

\subsection{Analytic properties of the linearisation}\label{ss:lin_is_iso}
We can now prove the main results of this section.

\begin{prop}\label{prop:iso}
Let $M$ be a K\"ahler manifold with negative definite Ricci tensor and let $\iota:L\to M$ be a compact $J$-minimal immersion.

In terms of the Lagrangian neighbourhood theorem applied to $\iota$, the linearisation of the Maslov map $\xi_J$, restricted to $\bar\rho$-Lagrangian deformations as in (\ref{L.eq}), is a Banach space isomorphism 
 $\mathcal{L}:\mathcal{Z}^{k+2,a}\to\mathcal{Z}^{k,a}$.
\end{prop}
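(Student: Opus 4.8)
The plan is to recognize $\mathcal{L}$ as a compact perturbation of the identity and apply Fredholm theory. Writing $\mathcal{L}(\alpha) = \alpha - \d\big(\rho_J^{-1}\d^*(\rho_J A^{-1*}\alpha)\big)$, set $K(\alpha) := \d\big(\rho_J^{-1}\d^*(\rho_J A^{-1*}\alpha)\big)$. First I would check that $K$ maps $\mathcal{Z}^{k+2,a}$ into $\mathcal{Z}^{k,a}$: the output is manifestly exact, hence closed, and the coefficients $\rho_J$, $A^{-1}$ are smooth (since $\iota$ is a smooth $J$-minimal immersion of a compact manifold and $M$ is smooth Kähler with nondegenerate Ricci), so $K$ loses at most two derivatives and lands in $C^{k,a}$. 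Since the inclusion $C^{k+2,a}\hookrightarrow C^{k,a}$ is compact on a compact manifold, and $K$ factors through this inclusion (it is in fact bounded $\mathcal{Z}^{k+2,a}\to\mathcal{Z}^{k+1,a}\hookrightarrow\mathcal{Z}^{k,a}$ after a closer look at orders), $K:\mathcal{Z}^{k+2,a}\to\mathcal{Z}^{k,a}$ is compact. Hence $\mathcal{L} = \mathrm{Id} - K$ is Fredholm of index zero, and to conclude it is an isomorphism it suffices to show $\ker\mathcal{L} = 0$.

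The injectivity step is where the negativity of the Ricci curvature enters, and I expect it to be the main point (though not the main obstacle — that is the next paragraph). Suppose $\mathcal{L}(\alpha) = 0$ with $\alpha\in\mathcal{Z}^{k+2,a}$ closed. By elliptic regularity applied to $\mathcal{L}$ restricted to closed forms — equivalently, after passing to the scalar reduction — $\alpha$ is smooth. The clean way is to pass to the scalar linearisation $\tL$ of \eqref{tL.eq}. Since $L$ is compact, $H^1(L;\R)$ may be nonzero, so a closed $\alpha$ need not be exact; but I would handle this by working directly with $\mathcal{L}$. Pair $\mathcal{L}(\alpha) = 0$ with $\alpha_\sharp$ (the $\iota^*\overline{\Ric}$-dual) against $\vol_J = \rho_J\,\vol_g$: the term $\d(\rho_J^{-1}\d^*(\rho_J A^{-1*}\alpha))$ integrated against $\alpha$ yields, after integration by parts, $-\int_L \rho_J^{-1}\big(\d^*(\rho_J A^{-1*}\alpha)\big)^2\,\vol_g \le 0$ (using that $\alpha$ is closed so $\d\alpha$ contributes nothing), while the identity term gives $\int_L \langle\alpha,\alpha_\sharp\rangle\,\vol_J = \int_L \iota^*\overline{\Ric}(\alpha_\sharp,\alpha_\sharp)\,\rho_J\vol_g$, which is \emph{strictly negative} unless $\alpha = 0$ because $\overline{\Ric}$ is negative definite. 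Wait — one must be careful about which pairing makes the two terms have the same sign; the correct choice is the one for which the integrated identity reads $0 = -\int \rho_J^{-1}(\d^*(\cdots))^2\,\vol_g + \int \iota^*\overline{\Ric}(\alpha_\sharp,\alpha_\sharp)\rho_J\,\vol_g$, forcing both terms to vanish, hence $\alpha_\sharp = 0$, hence $\alpha = 0$. This is essentially the strict stability of $J$-minimal (minimal Lagrangian) submanifolds in the negative case, recovered here at the linearised level, cf.\ \eqref{secondvar.eq} and Remark \ref{rem:convexity}.

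The main obstacle I anticipate is the functional-analytic bookkeeping to make ``$\mathcal{L} = \mathrm{Id} - K$ with $K$ compact, hence Fredholm index $0$'' rigorous \emph{on the closed subspaces} $\mathcal{Z}^{k+2,a}, \mathcal{Z}^{k,a}$ rather than on all 1-forms. Abstract Fredholm theory gives that $\mathrm{Id} - K$ is Fredholm of index $0$ on a Banach space $X$ when $K:X\to X$ is compact; here I need $K$ to be a compact operator from $\mathcal{Z}^{k+2,a}$ to $\mathcal{Z}^{k,a}$, but these are different spaces, so one should instead regard $\mathcal{L}$ as a bounded operator between them and verify surjectivity and finite-dimensional kernel separately, or — cleaner — compose with a fixed bounded inclusion/identification and reduce to a genuine $\mathrm{Id} - (\text{compact})$ statement on a single space. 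Concretely: $\mathcal{L}:\mathcal{Z}^{k+2,a}\to\mathcal{Z}^{k,a}$ is bounded; $K$ is bounded $\mathcal{Z}^{k+2,a}\to\mathcal{Z}^{k+1,a}$, hence compact $\mathcal{Z}^{k+2,a}\to\mathcal{Z}^{k,a}$ by Arzel\`a--Ascoli on the compact $L$; standard semi-Fredholm stability then says $\mathcal{L}$ has closed range and finite-dimensional kernel, and a parallel argument on the formal adjoint (or the index-zero property, obtained by connecting $\mathcal{L}$ to $\mathrm{Id}$ through $\mathcal{L}_s := \mathrm{Id} - sK$, $s\in[0,1]$, all Fredholm of the same index) gives finite-dimensional cokernel with $\operatorname{ind}\mathcal{L} = \operatorname{ind}\mathrm{Id} = 0$. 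Combined with $\ker\mathcal{L} = 0$ from the previous paragraph, this yields that $\mathcal{L}$ is a Banach space isomorphism. A minor additional check is elliptic regularity: a priori an element of $\ker\mathcal{L}$ is only $C^{k+2,a}$, but since the integration-by-parts argument only needs $C^2$ (or can be run on the smooth scalar potential locally), no extra regularity is needed; alternatively one bootstraps using that $\mathcal{L} + (\text{suitable gauge-fixing term})$ is elliptic.
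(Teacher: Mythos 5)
There is a genuine gap, and it is in the step that carries the analytic weight. The decomposition $\mathcal{L}=\mathrm{Id}-K$ with $K$ compact is false: $K(\alpha)=\d\big(\rho_J^{-1}\d^*(\rho_J A^{-1*}\alpha)\big)$ is a genuinely second-order operator in $\alpha$ — it \emph{is} the principal part of $\mathcal{L}$ — and it is not bounded $\mathcal{Z}^{k+2,a}\to\mathcal{Z}^{k+1,a}$; no derivative is gained on closed forms (in the KE case $K=\lambda^{-1}\d\d^*$, which on closed $1$-forms is $\lambda^{-1}\Delta_g$). The compact term in $\mathcal{L}$ is the identity summand (i.e.\ the inclusion $C^{k+2,a}\hookrightarrow C^{k,a}$), not $K$. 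Worse, your hypothesis is self-defeating: if $K$ were compact $\mathcal{Z}^{k+2,a}\to\mathcal{Z}^{k,a}$, then $\mathcal{L}=\mathrm{Id}-K$ would be compact between these spaces (the inclusion being compact), and a compact operator onto an infinite-dimensional Banach space cannot be surjective, contradicting the very statement you want. The homotopy $\mathcal{L}_s=\mathrm{Id}-sK$ also fails at $s=0$: viewed as a map $\mathcal{Z}^{k+2,a}\to\mathcal{Z}^{k,a}$, ``$\mathrm{Id}$'' is the inclusion, which has dense non-closed range and is not Fredholm, so there is no index-zero statement to deform from. Your injectivity argument (pairing $\mathcal{L}(\alpha)=0$ against $A^{-1*}\alpha=(\alpha_\sharp)^\flat$ in the weighted $L^2$-pairing and using negativity of $\overline{\Ric}$) is essentially correct and is exactly the quadratic-form computation in the paper, but injectivity alone does not give surjectivity without a Fredholm-type structure, which your argument does not provide.

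Note also that you cannot simply invoke ellipticity for $\mathcal{L}$ itself: its symbol $\alpha\mapsto A^{-1*}\big(\zeta\wedge(\zeta^\sharp\lrcorner\alpha)\big)$ annihilates $1$-forms pointwise orthogonal to $\zeta$, so $\mathcal{L}$ is not elliptic on $\Lambda^1(L)$ for $n\geq 2$. The paper's route is to extend $\mathcal{L}$ to $P(\alpha):=\mathcal{L}(\alpha)-A^*\rho_J^{-1}\d^*(\rho_J A^{-2*}\d\alpha)$ on all $C^{k+2,a}$ $1$-forms — the added term vanishes on closed forms — whose symbol is $|\zeta|^2A^{-1*}$, hence $P$ is elliptic and Fredholm; then to show that both $P$ and its formal adjoint with respect to $\langle\cdot,\cdot\rangle_J$ are injective by the same weighted integration-by-parts/negativity argument, so $P$ is an isomorphism; and finally, given closed $\beta$ and the solution of $P(\alpha)=\beta$, to show $\d\alpha=0$ by pairing $\d\alpha=\d\big(A^*\rho_J^{-1}\d^*(\rho_J A^{-2*}\d\alpha)\big)$ against $A^{-2*}\d\alpha$, whence $\mathcal{L}(\alpha)=P(\alpha)=\beta$ and $\mathcal{L}$ is onto $\mathcal{Z}^{k,a}$. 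Your proposal contains none of these three steps (elliptic extension, adjoint injectivity, closedness of the preimage), so the surjectivity half of the Proposition remains unproved.
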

\begin{proof}  Recall that we let $A$ denote the $g$-self-adjoint operator on $T_pL$ defined by the identity $\iota^*\overline{\Ric}(\cdot,\cdot)=g(A\cdot,\cdot)$. We can extend $\mathcal{L}$ from closed 1-forms to the space of all 1-forms via the map
\begin{equation*}
 P:C^{k+2,a}(\Lambda^1(L))\to C^{k,a}(\Lambda^1(L)), \ \ P(\alpha):=\mathcal{L}(\alpha)-A^*\rho_J^{-1}\d^*(\rho_J A^{-2*}\d\alpha).
\end{equation*}
We first show that $P$ is elliptic. Up to lower order terms $\mathcal{L}(\alpha)$ coincides with $-A^{-1*}(\d\d^*\alpha)$. Given $\zeta\in T_p^*L$, we thus find that its principal symbol is 
$$\sigma(\mathcal{L})_p(\zeta):T_p^*L\to T_p^*L, \ \ \alpha\mapsto A^{-1*}\big(\zeta\wedge(\zeta^\sharp\lrcorner\alpha)\big).$$
Similarly we compute that the principal symbol of $P$ is
$$\sigma(P)_p(\zeta):T_p^*L\to T_p^*L, \ \ \alpha\mapsto A^{-1*}\big(\zeta\wedge(\zeta^\sharp\lrcorner\alpha)+\zeta^\sharp\lrcorner(\zeta\wedge\alpha)\big)=|\zeta|^2A^{-1*}\alpha.$$
It is thus a self-adjoint isomorphism, negative-definite by hypothesis on $M$, so $P$ is elliptic. Standard theory then implies that $P$ is Fredholm.

We now show that $P$ is injective.  For convenience, for $k$-forms $\alpha,\beta$ on $L$ we define
\begin{equation*}
 \langle \alpha,\beta\rangle_J=\int_L\rho_J\alpha\wedge *\beta=\int_Lg(\alpha,\beta)\vol_J\quad\text{and}\quad \|\alpha\|_{J}^2=\langle\alpha,\alpha\rangle_J.
\end{equation*}
 Notice that
\begin{align*}
\langle P(\alpha), A^{-1*}\alpha\rangle_J
=&-\langle \d(\rho_J^{-1}\d^*(\rho_J A^{-1*}\alpha)),A^{-1*}\alpha\rangle_J+\langle\alpha,A^{-1*}\alpha\rangle_J\\
&-\langle A^*\rho_J^{-1}\d^*(\rho_J A^{-2*}\d\alpha),A^{-1*}\alpha\rangle_J\\
=&-\|\rho_J^{-1}\d^*(\rho_JA^{-1*}\alpha)\|^2_J-\|\sqrt{-A^{-1}}^*\alpha\|^2_J-\|A^{-1*}\d\alpha\|^2_J,
\end{align*}
where we use $\vol_J=\rho_J\vol_g$ to integrate by parts and the fact that $-A^{-1}$ is positive definite, so it has a well-defined square root. Since the right-hand side is non-positive, we see that $P(\alpha)=0$ implies  each term on the right-hand side is zero; in particular, $\alpha=0$.

One can check that the formal adjoint of $P$ with respect to the $L^2$-metric $\langle.,.\rangle_J$ defined using $\vol_J$ is the map
\begin{equation*}
 P^*(\alpha):=-A^{-1*}\d(\rho_J^{-1}\d^*(\rho_J\alpha))+\alpha-\rho_J^{-1}\d^*(A^{-2*}\rho_J\d (A^*\alpha)).
\end{equation*}
The same reasoning shows that $P^*$ is injective, using the identity
\begin{align*}
\langle P^*(\alpha),A^*\alpha\rangle_J=&-\|\rho_J^{-1}\d^*(\rho_J\alpha)\|^2_J-\|\sqrt{-A}^*\alpha\|^2_J-\|A^{-1*}\d(A^*\alpha)\|^2_J.
\end{align*}
Thus, $P$ is a Banach space isomorphism. 

We must now argue that its restriction $\mathcal{L}$ is an isomorphism on the appropriate spaces. We already know that $\mathcal{L}(\mathcal{Z}^{k+2,a})\subseteq\mathcal{Z}^{k,a}$. To prove equality, assume $\beta \in\mathcal{Z}^{k,a}$. We know there exists some $\alpha\in C^{k+2,a}(\Lambda^1(L))$ such that $P(\alpha)=\beta$. Then, after simplifying, one finds
\begin{equation*}
 0=\d\beta=\d P(\alpha)=\d\alpha-\d (A^*\rho_J^{-1}\d^*(\rho_J A^{-2*}\d\alpha))
\end{equation*}
so $\d\alpha=\d (A^*\rho_J^{-1}\d^*(\rho_J A^{-2*}\d\alpha))$. Taking the $L^2$-inner product $\langle.,.\rangle_J$ of both sides of this equality with $A^{-2*}\d\alpha$, one finds
\begin{equation*}
\|A^{-1*}\d\alpha\|^2_J=-\|\rho_J^{-1}\sqrt{-A}^*\d^*(\rho_JA^{-2*}\d\alpha)\|^2_J.
\end{equation*}
As the left-hand side is non-negative, whilst the right-hand side is non-positive, we deduce both must vanish and hence $\d\alpha=0$. This proves that $\mathcal{L}$ is an isomorphism.
\end{proof}

\begin{remark}\label{rem:iso} If $M$ is KE with $\overline{\Ric}=\lambda \bar g$, $\lambda<0$, then the $J$-minimal immersion is minimal Lagrangian. In this case the calculations are much simpler because $\mathcal{L}=-\lambda^{-1}\d\d^*+\mbox{Id}$. It thus extends to $-\lambda^{-1}\Delta_g+\mbox{Id}$ on the space of all 1-forms. This operator is self-adjoint and is a Banach space isomorphism.  Hence, $\mathcal{L}$ is an isomorphism.
\end{remark}

\begin{prop}\label{prop:iso_bis}
Let $M$ be a K\"ahler manifold with definite Ricci tensor and $\iota:L\to M$ a compact $J$-minimal immersion.

In terms of the Lagrangian neighbourhood theorem applied to $\iota$, the linearisation of the Maslov map $\xi_J$, restricted to exact $\bar\rho$-Lagrangian deformations as in (\ref{tL.eq}), is an elliptic self-adjoint operator $\tL:C^{k+3,a}_0(L)\to C^{k+1,a}_0(L)$. 
When the Ricci tensor is negative definite, $\tL$ is a Banach space isomorphism.
\end{prop}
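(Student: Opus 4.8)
The plan is to read off the three assertions directly from the explicit formula $\tL(f)=-\rho_J^{-1}\d^*(\rho_J A^{-1*}\d f)+f$ of \eqref{tL.eq}, which is available here precisely because $\iota$ being $J$-minimal gives $\xi_J[\iota]=0$, so that Lemma \ref{l:exact} (making $\tF$, and hence $\tL$, well-defined) and Proposition \ref{lin.xi.prop} apply. As a preliminary I would check that $\tL$ really does land in $C^{k+1,a}_0(L)$: pairing $\tL f$ against $\vol_J=\rho_J\vol_g$ gives $-\int_L\d^*(\rho_J A^{-1*}\d f)\,\vol_g+\int_L f\,\vol_J$, which vanishes by Stokes on the closed manifold $L$ together with $f\in C^{k+3,a}_0$. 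Equivalently, $\tL 1=1$, so $\tL$ respects the $\langle\cdot,\cdot\rangle_J$-orthogonal splitting of $C^{k,a}(L)$ into mean-zero functions and constants.

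For ellipticity I would compute the principal symbol. Since $\d$ has symbol $f\mapsto\zeta f$ and $\d^*$ has symbol a non-zero multiple of $\beta\mapsto g(\zeta^\sharp,\beta^\sharp)$, and the scalar weight $\rho_J$ cancels, the symbol of $\tL$ at $\zeta\in T^*_pL\setminus\{0\}$ is scalar multiplication on functions by a non-zero multiple of $g(A^{-1}\zeta^\sharp,\zeta^\sharp)$, using that $A^{-1*}$ corresponds to $A^{-1}$ under the musical isomorphisms of $g$. Since by hypothesis $A$, and hence $A^{-1}$, is definite, this scalar does not vanish, so $\tL$ is a second-order elliptic operator on functions. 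For self-adjointness with respect to $\langle\cdot,\cdot\rangle_J$ I would integrate by parts: for $f,h\in C^{k+3,a}(L)$,
\[
\langle\tL f,h\rangle_J=-\int_L\rho_J\,g\big(A^{-1*}\d f,\d h\big)\,\vol_g+\int_L fh\,\vol_J,
\]
which is symmetric in $f$ and $h$ because $A^{-1*}$ is $g$-self-adjoint. (The point is that $\vol_J=\rho_J\vol_g$ is exactly the weight which makes $-\rho_J^{-1}\d^*(\rho_J\,\cdot\,)$ the formal $\langle\cdot,\cdot\rangle_J$-adjoint of $\d$.) Note that these two steps use only that $\overline{\Ric}$ is definite.

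When $\overline{\Ric}$ is negative definite, $A$ is negative definite, so $-A^{-1}$ is positive definite, and the displayed identity with $h=f$ reads $\langle\tL f,f\rangle_J=\int_L\rho_J\,g(-A^{-1*}\d f,\d f)\,\vol_g+\|f\|_J^2\ge\|f\|_J^2$, with equality only for $f=0$; hence $\tL$ is injective. Being elliptic and formally self-adjoint on the compact manifold $L$, $\tL$ is Fredholm of index zero between the Hölder spaces $C^{k+3,a}(L)$ and $C^{k+1,a}(L)$, so injectivity promotes it to a Banach space isomorphism there, and restricting to the $\tL$-invariant mean-zero subspaces (using $\tL 1=1$) yields the claimed isomorphism $\tL:C^{k+3,a}_0(L)\to C^{k+1,a}_0(L)$. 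As an alternative to the Fredholm argument, surjectivity can be extracted from Proposition \ref{prop:iso}: given $h\in C^{k+1,a}_0(L)$, solve $\mathcal{L}\alpha=\d h$ with $\alpha\in\mathcal{Z}^{k+2,a}$; since $\mathcal{L}$ is an isomorphism carrying exact forms to exact forms (Lemma \ref{l:exact}) it induces an isomorphism on $H^1(L)$, so $\alpha$ is exact, say $\alpha=\d g$ with $g$ mean-zero, and then $\d\tL g=\mathcal{L}\d g=\d h$ forces $\tL g=h$.

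I expect the only real obstacle to be bookkeeping: keeping straight the identifications $A^{-1*}\leftrightarrow A^{-1}$ and the placement of the weight $\rho_J$ so that each integration by parts is clean, and noting that it is exactly the sign of $\langle\tL f,f\rangle_J$ that degenerates in the positive-definite case --- which is why the isomorphism assertion, unlike ellipticity and self-adjointness, is stated only for negative Ricci curvature.
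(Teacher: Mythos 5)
Your proposal is correct and follows essentially the same route as the paper's proof: the explicit formula \eqref{tL.eq}, a principal-symbol computation using definiteness of $A$ for ellipticity, integration by parts against $\langle\cdot,\cdot\rangle_J$ for formal self-adjointness, the quadratic-form identity $\langle\tL f,f\rangle_J\geq\|f\|_J^2$ for injectivity in the negative case, and Fredholm-plus-self-adjointness to conclude the isomorphism (you merely make explicit the mean-zero bookkeeping and the index-zero step that the paper leaves implicit). The only minor quibble is in your optional alternative surjectivity argument: the fact that $\mathcal{L}$ sends exact forms to exact forms follows from the identity $\d\tL(f)=\mathcal{L}(\d f)$ rather than from Lemma \ref{l:exact}, which concerns the nonlinear Maslov map.
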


\begin{proof}
To prove that $\tL$ is self-adjoint with respect to the $L^2$-metric $\langle.,.\rangle_J$ defined using $\vol_J$, choose any function $h$ on $L$. The result then follows from the identity
\begin{align*}
\langle \rho_J^{-1}\d^*(\rho_J A^{-1*}\d f),h\rangle_J&=
\langle A^{-1^*}\d f, \d h\rangle_J=\langle\d f, A^{-1^*}\d h\rangle_J\\
&=\langle f,\rho_J^{-1}\d^*(\rho_J A^{-1*}\d h)\rangle_J.
\end{align*}
To prove ellipticity it suffices to study the highest order terms of $\tL(f)$, \textit{i.e.}~of $-\d^*(A^{-1*}\d f)$. Using $g$-normal coordinates at $p\in L$ which diagonalize $A(p)$, the principal symbol at $\zeta\in T^*_pL$ is (using summation convention)
\begin{equation*}
\sigma(\tL)_p(\zeta):=\lambda^{-1}_i\zeta_i^2,
\end{equation*} 
which vanishes if and only if $\zeta=0$, as desired. It follows that $\tL$ is elliptic and thus Fredholm.

To prove that $\tL$ is injective when the Ricci curvature is negative, assume $\tL(f)=0$. Then
\begin{equation*}
0=\langle\tL(f),f\rangle_J=\|\sqrt{-A^{-1}}^*\d f\|_J^2+\|f\|^2_J\geq 0,
\end{equation*}
with equality if and only if $f=0$.  It follows that $\tL$ is an isomorphism.
\end{proof}

\section{Persistence and uniqueness results}\label{s:Lag.KE}

In general, given a Lagrangian  $L$ in a symplectic manifold $(M,\omega)$ and a curve $\omega_t$ of symplectic structures on $M$ with $\omega_0=\omega$, 
there are obstructions to finding a corresponding curve of Lagrangian deformations of $L$. 

If however $\omega_t=\omega+\d\alpha_t$, \textit{i.e.}~they are all cohomologous, then a theorem of Moser shows that the $\omega_t$ are all symplectomorphic. Indeed, the non-degeneracy condition on $\omega_t$ allows us to find a curve a vector fields $X_t$ such that $\omega_t(X_t,\cdot)+\alpha_t=0$. Let $\phi_t\in \Diff(M)$ denote the flow of $X_t$. Then
\begin{equation}
 \frac{d}{dt}\phi_t^*\omega_t=\phi_t^*(\mathcal{L}_{X_t}\omega_t+\dot{\omega}_t)=\phi_t^*\d (X_t\lrcorner \omega_t+\alpha_t)=0,
\end{equation}
where we use the fact that $\d\omega_t=0$. It follows that $\phi_t^*\omega_t=\phi_0^*\omega_0=\omega$. As an application of this result, we find that any initial $\omega$-Lagrangian immersion $\iota:L\to M$ can be perturbed to the family of $\omega_t$-Lagrangian immersions $\iota_t:=\phi_t\circ\iota$.

We are interested in the following two cases.

\begin{prop}\label{prop:moser}
 Let $(M,J,\overline{g},\overline{\omega})$ be a K\"ahler manifold with definite Ricci tensor and let $\iota:L\to M$ be a compact $J$-minimal immersion. 
 \begin{itemize}
  \item Assume $M$ is KE so that $\iota$ is minimal Lagrangian. Let $(J_t,\overline{g}_t,\overline{\omega}_t)$ be a curve of KE structures on $M$ which coincides with the original structure when $t=0$. Then there exists a curve of $\overline{\omega}_t$-Lagrangian immersions $\iota_t:L\to M$ with $\iota_0=\iota$.
  \item Let $(J,\bar g_t,\bar\omega_t)$ be a curve of K\"ahler structures in the same cohomology class as $\bar\omega$, which coincides with the original structure when $t=0$. Assume the corresponding Ricci 2-forms $\bar\rho_t$ are definite. Then there exists a curve of $\bar\rho_t$-Lagrangian immersions $\iota_t:L\to M$ with $\iota_0=\iota$.
 \end{itemize}

\end{prop}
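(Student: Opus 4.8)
The plan is to prove both items by the Moser trick, following the discussion immediately preceding the statement, with the only extra work being to check that the relevant symplectic forms are cohomologous so that Moser applies. Once that is done, pulling back $\iota$ by the resulting family of symplectomorphisms produces the desired curves of Lagrangian immersions.

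For the first item, $\iota$ is minimal Lagrangian, hence Lagrangian with respect to $\overline\omega=\overline\omega_0$, and by Theorem \ref{thm:minlag_is_Jmin} it is also $\bar\rho_0$-Lagrangian; since $M$ is KE with $\bar\rho_0=\lambda_0\overline\omega_0$, these two conditions coincide. The subtlety is that along a curve of KE structures the cohomology class of $\overline\omega_t$ need not be constant, so Moser does not apply directly to $\overline\omega_t$. Instead I would work with the Ricci forms: for each $t$ we have $\bar\rho_t=\lambda_t\overline\omega_t$ with $\lambda_t\neq 0$ (definiteness is an open condition, preserved for small $t$), and $2\pi c_1(M)=[\bar\rho_t]$ is a fixed cohomology class independent of $t$ by Chern--Weil. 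Thus the family $\bar\rho_t$ consists of symplectic forms all lying in the \emph{same} cohomology class $2\pi c_1(M)$, so $\bar\rho_t=\bar\rho_0+\d\alpha_t$ for a smooth curve of $1$-forms $\alpha_t$ with $\alpha_0=0$ (obtained, e.g., from a smoothly varying Hodge decomposition). Moser's argument then yields a curve $\phi_t\in\Diff(M)$ with $\phi_t^*\bar\rho_t=\bar\rho_0$ and $\phi_0=\mathrm{id}$. Setting $\iota_t:=\phi_t\circ\iota$, each $\iota_t$ is $\bar\rho_t$-Lagrangian because $\iota_t^*\bar\rho_t=\iota^*\phi_t^*\bar\rho_t=\iota^*\bar\rho_0=0$. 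Finally, since the structures are KE, $\bar\rho_t=\lambda_t\overline\omega_t$, so $\iota_t^*\bar\rho_t=0$ is equivalent to $\iota_t^*\overline\omega_t=0$, i.e. $\iota_t$ is $\overline\omega_t$-Lagrangian, as required.

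For the second item the argument is the same but simpler, because now the hypothesis already guarantees the needed cohomological constancy: $[\overline\omega_t]=[\overline\omega]$ for all $t$. By assumption the Ricci forms $\bar\rho_t$ are definite, hence symplectic, and they all represent $2\pi c_1(M)$, so again $\bar\rho_t=\bar\rho_0+\d\alpha_t$ with $\alpha_0=0$. (Alternatively one could run Moser directly with the $\overline\omega_t$, but since we want $\bar\rho_t$-Lagrangians it is cleanest to apply it to the $\bar\rho_t$ themselves.) Moser produces $\phi_t\in\Diff(M)$, $\phi_0=\mathrm{id}$, with $\phi_t^*\bar\rho_t=\bar\rho_0$, and then $\iota_t:=\phi_t\circ\iota$ is $\bar\rho_t$-Lagrangian since $\iota^*\bar\rho_0=0$ by Theorem \ref{thm:minlag_is_Jmin} applied to the original $J$-minimal immersion $\iota$.

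The only genuine point requiring care — and hence the main obstacle — is the non-degeneracy needed to solve $\bar\rho_t(X_t,\cdot)+\alpha_t=0$ for $X_t$: this is exactly where definiteness of the Ricci tensor enters, and it must be checked that definiteness persists for $t$ in a (possibly shorter) interval, which is automatic by openness since $L$ and $M$ are compact. Everything else — smooth dependence of the Hodge projection on $t$, integrating the time-dependent vector field $X_t$ to a flow on the compact manifold $M$, and the final pullback computation — is routine. I would state the conclusion for $t$ in a sufficiently small interval around $0$, which suffices for the persistence applications in the next subsection.
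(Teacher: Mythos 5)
Your proof is correct and rests on the same Moser-trick argument as the paper; the only real difference is in the first bullet point. There the paper notes that $c_1(M)$, being an integral class, is constant in $t$, hence the $\bar\rho_t$ are cohomologous, hence (via the KE proportionality with non-zero Einstein constant) so are the $\bar\omega_t$, and then runs Moser directly on the family $\bar\omega_t$; you instead run Moser on the family $\bar\rho_t$ and recover the $\bar\omega_t$-Lagrangian condition at the end from $\bar\rho_t=\lambda_t\bar\omega_t$, $\lambda_t\neq 0$. Your variant is marginally more robust, since it never needs $[\bar\omega_t]$ to be constant (so it is indifferent to whether the Einstein constant is held fixed along the curve), at the cost of one extra line at the end. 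One small point of precision: you justify the constancy of $[\bar\rho_t]$ "by Chern--Weil", but Chern--Weil only gives $[\bar\rho_t]=2\pi c_1(M,J_t)$ for each $t$; since $J_t$ varies in the first bullet, you also need the observation (made explicitly in the paper) that $c_1(M,J_t)$ is an integral class depending continuously on $t$ and is therefore independent of $t$. With that remark added, your argument matches the paper's in substance, including the treatment of the second bullet, where both proofs apply Moser to $\bar\rho_t$ and use that $\iota$ is $\bar\rho$-Lagrangian by Theorem \ref{thm:minlag_is_Jmin}.
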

\begin{proof}
In the first case, since $c_1(M)$ is an integral cohomology class it does not change with $t$. 
This implies that the corresponding Ricci 2-forms $\overline{\rho}_t$ are cohomologous, so by the KE condition the same is true for $\overline{\omega}_t$: here the assumption that the Einstein constant is non-zero is crucial. We can thus apply Moser's theorem to $(M,\bar\omega_t)$, as above.

In the second case the initial $\iota$ is $\bar\rho$-Lagrangian. Since the Ricci forms belong to $c_1(M)$ they are all cohomologous, so we can apply Moser's theorem to $(M,\bar\rho_t)$.
\end{proof}

\paragraph{Minimal Lagrangian submanifolds.} Let $M$ be a negative KE manifold and let $\iota:L\to M$ be minimal Lagrangian. 

It follows from \cite{White} that $\iota$ is locally unique in the space of all minimal submanifolds in $M$ (even without the Lagrangian condition). We sketch a simple alternative proof in Remark \ref{rem:uniqueness}, below.

Concerning persistence, we shall now prove one of our main results: minimal Lagrangians persist under small KE perturbations of a given negative KE structure.

\begin{thm}\label{thm:persistence}
Let $(M,J,\overline{g},\overline{\omega})$ be a negative K\"ahler--Einstein manifold and let $\iota:L\to M$ be a compact minimal Lagrangian. For any small K\"ahler--Einstein deformation 
 $(J',\overline{g}',\overline{\omega}')$ of the ambient structure there exists a unique minimal Lagrangian $\iota':L\to (M,J',\overline{g}',\overline{\omega}')$ near $\iota$.
\end{thm}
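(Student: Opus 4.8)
The plan is to reduce the persistence statement to the implicit function theorem applied to the scalar Maslov map $\tF$ introduced in Section \ref{s:linearisation}, using Moser's theorem (Proposition \ref{prop:moser}) to put all the nearby K\"ahler--Einstein structures into a common symplectic normal form. First I would observe that, by Theorem \ref{thm:minlag_is_Jmin}, in the negative KE case the conditions ``minimal Lagrangian'' and ``$J$-minimal'' (i.e. $\xi_J[\iota]=0$) coincide, and moreover $\bar\rho=\lambda\bar\omega$ with $\lambda<0$, so $\iota$ being Lagrangian is the same as being $\bar\rho$-Lagrangian. Thus the problem becomes: given the small KE deformation $(J',\bar g',\bar\omega')$ with Ricci form $\bar\rho'$, find a $\bar\rho'$-Lagrangian immersion $\iota'$ near $\iota$ with $\xi_J'[\iota']=0$, where $\xi_J'$ is the Maslov form computed with respect to the new structure.

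The key technical step is to transport everything back to a fixed symplectic manifold. Since $c_1(M)$ is integral and hence locally constant in $t$, the first bullet of Proposition \ref{prop:moser} (equivalently a direct Moser argument) gives a diffeomorphism $\psi'$ of $M$, close to the identity, with $\psi'^*\bar\rho'=\bar\rho$ (using $\lambda\neq 0$). Pulling the deformed KE structure back by $\psi'$, we may assume the Ricci form $\bar\rho'$ is \emph{equal} to $\bar\rho$ for all nearby structures, so that $\iota$ itself remains $\bar\rho'$-Lagrangian and we may apply the Lagrangian neighbourhood theorem to $(M,\bar\rho)$ once and for all, with the fixed Lagrangian distribution $E_p=J(T_pL)$. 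The $\bar\rho$-Lagrangians near $\iota(L)$ are then parametrised by closed $1$-forms $\alpha$, and the exact ones by functions $f$, exactly as in Section \ref{ss:weinstein}. This gives a family of scalar Maslov maps $\tF'=\tF_{(J',\bar g')}:\tU\to C^{k+1,a}_0(L)$ depending smoothly on the parameter, with $\tF_{(J,\bar g)}(0)=0$ since $\iota$ is $J$-minimal; one must check here that $\xi_J'[\iota]$ is still exact (indeed it is closed and cohomologous to zero because $c_1$ is fixed, or one invokes Lemma \ref{l:exact} together with the fact that it vanishes at $t=0$), so that $\tF'$ is genuinely defined into functions of mean value zero. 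The linearisation $D\tF_{(J,\bar g)|0}=\tL$ was computed in \eqref{tL.eq}, and Proposition \ref{prop:iso_bis} shows that in the negative definite case $\tL:C^{k+3,a}_0(L)\to C^{k+1,a}_0(L)$ is a Banach space isomorphism.

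The implicit function theorem in Banach spaces then yields, for each small deformation, a unique $f'\in\tU$ with $\tF'(f')=0$, depending smoothly on the deformation parameter and with $f'\to 0$ as the deformation shrinks; elliptic regularity bootstraps $f'$ to be smooth, so the bound does not depend on $k$. Setting $\iota':=\psi'\circ\iota_{\d f'}$ undoes the Moser diffeomorphism and produces a $\bar\rho'$-Lagrangian immersion with $\xi_J'[\iota']=0$, hence minimal Lagrangian for the new KE structure, and close to $\iota$. Uniqueness ``near $\iota$'' needs a small extra argument: one must know that \emph{every} minimal Lagrangian near $\iota$ arises this way, i.e. that it is $\bar\rho'$-Lagrangian (immediate from Theorem \ref{thm:minlag_is_Jmin}, since in the KE case minimal Lagrangian $\Leftrightarrow$ $J$-minimal $\Rightarrow$ $\bar\rho$-Lagrangian), hence exact for nearby immersions (using $H^1$ and the normalisation, after possibly reparametrising to kill the $\Diff(L)$ ambiguity and the closed-but-not-exact part — note $\tF$ was only defined on exact forms, so one should either first extend to $F$ on $\mathcal{Z}^{k+2,a}$ using Proposition \ref{prop:iso} and its isomorphism statement $\mathcal{L}:\mathcal{Z}^{k+2,a}\to\mathcal{Z}^{k,a}$, or argue that the cohomology class of $\alpha$ is forced to vanish). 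I expect the main obstacle to be precisely this bookkeeping: organising the $\Diff(L)$-gauge-fixing, the cohomological normalisations (mean value zero, exactness, constancy of $c_1$), and the dependence of the Lagrangian neighbourhood identification on the varying structure, so that the IFT applies cleanly and the resulting $\iota'$ is genuinely unique among \emph{all} nearby minimal Lagrangians rather than just among the exact $\bar\rho$-Lagrangian graphs.
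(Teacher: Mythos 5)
Your scaffolding (connect the structures by a curve of negative KE structures, use Proposition \ref{prop:moser} / Moser to normalise, apply the Lagrangian neighbourhood theorem, and run the Implicit Function Theorem on a Maslov-type map whose linearisation is an isomorphism) is the paper's scaffolding, and your "pull the structure back by the Moser diffeomorphism" is an equivalent packaging of the paper's "push the Lagrangian forward to $\iota_t$". The divergence is that the paper's proof of Theorem \ref{thm:persistence} runs the IFT on the \emph{form-level} map $F(\alpha,t)=\xi_{J_t}[\iota_{\alpha,t}]$ defined on closed $1$-forms $\mathcal{Z}^{k+2,a}$, using Proposition \ref{prop:iso} (indeed just Remark \ref{rem:iso}, since in the KE case $\mathcal{L}=-\lambda^{-1}\d\d^*+\mathrm{Id}$), and obtains smoothness of the solutions from standard regularity for minimal submanifolds. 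You instead route through the scalar map $\tilde{F}$ on exact deformations, the device the paper reserves for Theorem \ref{thm:persistence_bis}, and this creates two obligations that your sketch does not correctly discharge.

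First, exactness of the Maslov form along the family: neither of your justifications works. Lemma \ref{l:exact} is proved for a \emph{fixed} ambient structure and a curve of exact graphs, whereas here the ambient K\"ahler structure varies with the parameter, so the $1$-form $\Xi$ on $TR^+(M)$ varies and the extra term $\dot{\Xi}_t$ must be controlled; this is exactly the nontrivial computation in the proof of Theorem \ref{thm:persistence_bis}, using $\bar\rho_t=\bar\rho+\d\d^cf_t$ and the Moser vector field. And "closed and cohomologous to zero because $c_1$ is fixed" is not an argument: $\d\xi_J=\iota^*\bar\rho$ only determines $\d\xi_J$, while the class $[\xi_J]\in H^1(L;\R)$ is not determined by $c_1(M)$. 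Second, uniqueness: the scalar IFT only gives uniqueness among \emph{exact} $\bar\rho'$-Lagrangian graphs, and when $H^1(L)\neq 0$ a competing minimal Lagrangian near $\iota$ is a priori the graph of a closed but non-exact form; nothing in your argument forces its class to vanish. You flag this yourself and point to the operator $\mathcal{L}$ on closed forms of Proposition \ref{prop:iso} as the repair — but that repair \emph{is} the paper's proof, and once you adopt it the scalar detour (and with it the exactness problem) becomes unnecessary: in the KE case every minimal Lagrangian near $\iota$ is $\bar\omega_t$-Lagrangian by Theorem \ref{thm:minlag_is_Jmin}, hence is parametrised by a closed form with $F(\alpha,t)=0$, so the IFT applied to $F$ yields both existence and the full uniqueness statement, with smoothness supplied by minimal-submanifold regularity.
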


\begin{proof}
Since $(J',\overline{g}',\overline{\omega}')$ is a deformation of the initial KE structure, there exists a curve of negative KE structures $(J_t,\overline{g}_t,\overline{\omega}_t)$ connecting them. Let $\iota_t$ be a curve of $\bar\omega_t$-Lagrangians as in Proposition \ref{prop:moser}. Consider the Lagrangian neighbourhood theorem for each $t$, applied to $\iota_t$: if the deformation is sufficiently small, we obtain a $t$-independent open neighbourhood $V$ of $L$ in $T^*L$ and, 
for each $t$, a tubular neighbourhood $T_t$ of $\iota_t(L)$ in $M$ and a symplectomorphism $\Phi_t:V\to (M,\overline{\omega}_t)$. 

As in \eqref{closed.forms.eq}, let $\mathcal{U}$ be the subset of closed 1-forms $\alpha$ in $\mathcal{Z}^{k+2,a}$ (defined using a fixed metric $g$ on $L$) whose graphs are contained in $V$. We can use these forms to parametrize the Lagrangian deformations $\iota_{\alpha,t}$ of $\iota_t$. We thus obtain restricted Maslov maps
\begin{equation}\label{Ft.eq}
F:\mathcal{U}\times [0, \epsilon)\to \mathcal{Z}^{k,a},\ \ F(\alpha,t)=\xi_{J_t}[\iota_{\alpha,t}]. 
\end{equation}
Theorem \ref{thm:minlag_is_Jmin} shows that $F(\alpha,t)=0$ if and only if $\iota_{\alpha,t}:L\to (M,\overline{\omega}_t)$ is 
a $C^{k+2,a}$ minimal Lagrangian. According to the standard regularity theory for minimal submanifolds, this coincides with the space of smooth minimal Lagrangians. 

Consider the linearisation of \eqref{Ft.eq} at $(0,0)$:
$$DF_{|(0,0)}:\mathcal{Z}^{k+2,a}\times\R\to\mathcal{Z}^{k,a}.$$
Restricting to $\mathcal{Z}^{k+2,a}\times\{0\}$ we obtain the map discussed in Proposition \ref{prop:iso}; for this purpose, the simpler result of Remark \ref{rem:iso} actually suffices. The full linearisation is thus surjective and its kernel is isomorphic, under projection, to $\R$. Applying the Implicit Function Theorem shows that, for any small $t$, there is a unique solution to the equation $F(\alpha,t)=0$.
\end{proof}

\begin{remark}\label{rem:white_bis}
It follows from \cite[Theorem 2.1]{White} that $L$ persists as a minimal submanifold under arbitrary small perturbations of the metric $\overline{g}$. However those methods do not show that, under KE perturbations, these deformed submanifolds are Lagrangian with respect to the new K\"ahler forms. The analogue of Theorem \ref{thm:persistence} in the special case $n=2$ was proved in \cite{YILee} but the techniques used there apply only to that dimension.
\end{remark}

\begin{remark}\label{rem:uniqueness} We prove here that, when $M$ is KE with negative scalar curvature, a minimal Lagrangian $\iota$ is locally unique in the space of all minimal submanifolds in $M$.

As $\iota$ is Lagrangian, by identifying $L$ with its image  we have identifications $T_pL^\perp\simeq T_pL\simeq T_p^*L$ for all $p\in L$. Using the tubular neighbourhood theorem we can thus identify submanifolds which are $C^1$-close to $\iota(L)$, as well as the corresponding mean curvature vector fields, with 1-forms on $L$. We then obtain a map 
\begin{equation*}
G:\mathcal{V}\subseteq C^{k+2,a}(T^*L)\to C^{k,a}(T^*L),\ \ G(\alpha):=H(\iota_\alpha),
\end{equation*}
where $\iota_\alpha$ is the immersion corresponding to $\alpha$ and $\mathcal{V}$ is some open neighbourhood of $0$.  By the second variation formula for the Riemannian volume functional at a minimal Lagrangian in a K\"ahler manifold (see \cite[Chapter V $\S$4]{Chen} and \cite[Theorem 3.5]{Oh}), the linearisation of $G$ at $0$ is
\begin{equation*}
\mathcal{L}_G(\alpha):=DG_{|0}(\alpha)=(-\Delta_g+\lambda)\alpha,
\end{equation*}
where $\overline{\Ric}=\lambda\bar{g}$.  Clearly $\mathcal{L}_G$ is elliptic and, since $\lambda<0$, it defines a Banach space isomorphism 
$\mathcal{L}_G:C^{k+2,a}(T^*L)\to C^{k,a}(T^*L)$.  In particular there 
 exists a constant $c_1>0$ such that, for all $\alpha\in C^{k+2,a}(T^*L)$,
\begin{equation}\label{eq:LG.est}
\|\alpha\|_{C^{k+2,a}}\leq c_1\|\mathcal{L}_G(\alpha)\|_{C^{k,a}}.
\end{equation} 

Since $G$ depends at most on second derivatives of $\alpha$, we may write
\begin{equation}\label{eq:Gdecomp}
G(\alpha)=\mathcal{L}_G(\alpha)+\mathcal{Q}_G(\alpha),
\end{equation}
where $\mathcal{Q}_G(\alpha)$ is a function of up to second derivatives of $\alpha$ whose value and first derivatives vanish at $\alpha=0$. It follows that there exists a constant $c_2>0$ such that
\begin{equation}\label{eq:QG.est}
\|\mathcal{Q}_G(\alpha)\|_{C^{k,a}}\leq c_2\|\alpha\|_{C^{k+2,a}}^2.
\end{equation}
Suppose now that we have a sequence of minimal submanifolds converging to $L$ and not equal to $L$.  We then obtain a sequence $\alpha_n\in\mathcal{V}\setminus\{0\}$, $\alpha_n\to 0$, satisfying $G(\alpha_n)=0$. Then by \eqref{eq:Gdecomp} we have that 
\begin{equation*}
\mathcal{L}_G(\alpha_n)=-\mathcal{Q}_G(\alpha_n).
\end{equation*}
Taking the norm of both sides and using the estimates \eqref{eq:LG.est} and \eqref{eq:QG.est} we see that
\begin{equation*}
c_1^{-1}\|\alpha_n\|_{C^{k+2,a}}\leq \|\mathcal{L}_G(\alpha_n)\|_{C^{k,a}}
=\|\mathcal{Q}_G(\alpha_n)\|_{C^{k,a}}\leq c_2\|\alpha_n\|_{C^{k+2,a}}^2,
\end{equation*}
obtaining a contradiction as $n\to\infty$.
\end{remark}

\paragraph{$J$-minimal submanifolds.}
Let $M$ be a K\"ahler manifold with negative Ricci curvature and let $\iota:L\rightarrow M$ be a compact $J$-minimal immersion. The second variation of the $J$-volume at $\iota$ is computed in \cite{LPtotally}, showing that $\iota$ is strictly stable so there are no Jacobi fields. It follows that $\iota$ has no $J$-minimal deformations. We can now prove the stronger result that $\iota$ is actually isolated.
\begin{thm}\label{thm:uniqueness}
Let $M$ be a K\"ahler manifold with negative Ricci curvature and let $\iota:L\to M$ be a compact $J$-minimal immersion. Then $\iota$ is locally unique, in the sense that there exists an open $C^{2,a}$-neighbourhood of $\iota$ containing no other $J$-minimal immersions.
\end{thm}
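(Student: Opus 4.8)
The plan is to mimic the uniqueness argument already sketched for minimal Lagrangians in Remark \ref{rem:uniqueness}, but now working with the Maslov map $\xi_J$ instead of the mean curvature vector field. The key structural fact that makes this possible is Theorem \ref{thm:minlag_is_Jmin}: since $\overline{\Ric}$ is negative definite, any $J$-minimal immersion is automatically $\bar\rho$-Lagrangian, so we do not need to control all totally real deformations of $\iota$ — only the $\bar\rho$-Lagrangian ones, which are parametrized (via the Lagrangian neighbourhood theorem applied to $(M,\bar\rho)$ as in Section \ref{ss:weinstein}) by closed $1$-forms $\alpha\in\mathcal{Z}^{k+2,a}$ near $0$. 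On this slice the relevant operator is the restricted Maslov map $F(\alpha)=\xi_J[\iota_\alpha]$ of \eqref{F.eq}, whose linearisation at $0$ is the operator $\mathcal{L}$ of \eqref{L.eq}, which by Proposition \ref{prop:iso} is a Banach space isomorphism $\mathcal{L}:\mathcal{Z}^{k+2,a}\to\mathcal{Z}^{k,a}$. So the standard ``linearisation plus quadratic remainder'' contradiction argument applies verbatim.

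Concretely, first I would show that an arbitrary $J$-minimal immersion $\iota'$ that is $C^{2,a}$-close to $\iota$ can be assumed, after reparametrisation, to be of the form $\iota_\alpha$ for some small closed $\alpha$: it is totally real (an open condition), hence $\bar\rho$-Lagrangian by Theorem \ref{thm:minlag_is_Jmin}, hence $\d\alpha=0$ by the Lagrangian neighbourhood theorem. Regularity theory (the $\xi_J=0$ equation, while only degenerate elliptic on the full space of totally real immersions, restricts to the genuinely elliptic operator of Proposition \ref{prop:iso} on the $\bar\rho$-Lagrangian slice) upgrades $\iota'$ to $C^{k+2,a}$ for any $k$. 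Next, write $F(\alpha)=\mathcal{L}(\alpha)+\mathcal{Q}(\alpha)$ where $\mathcal{Q}$ collects terms of up to second order in $\alpha$ vanishing to second order at $\alpha=0$ — this uses the fact, noted after \eqref{F.eq}, that $\xi_J$ is a second-order operator in $\alpha$ — so that $\|\mathcal{Q}(\alpha)\|_{C^{k,a}}\leq c_2\|\alpha\|_{C^{k+2,a}}^2$ on a small ball. The isomorphism from Proposition \ref{prop:iso} gives $\|\alpha\|_{C^{k+2,a}}\leq c_1\|\mathcal{L}(\alpha)\|_{C^{k,a}}$. Now if $\iota_{\alpha_n}$ is a sequence of $J$-minimal immersions with $\alpha_n\to0$ in $C^{2,a}$ and $\alpha_n\neq0$, then $\mathcal{L}(\alpha_n)=-\mathcal{Q}(\alpha_n)$, so $c_1^{-1}\|\alpha_n\|_{C^{k+2,a}}\leq c_2\|\alpha_n\|_{C^{k+2,a}}^2$, forcing $\|\alpha_n\|_{C^{k+2,a}}\geq (c_1c_2)^{-1}$, contradicting $\alpha_n\to0$.

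One technical point deserves care in the write-up: the contradiction as stated produces a sequence bounded below in $C^{k+2,a}$, which contradicts $C^{k+2,a}$-convergence to $0$, but the theorem only hypothesises a $C^{2,a}$-neighbourhood; I would handle this either by invoking elliptic estimates/Schauder bootstrapping to promote $C^{2,a}$-smallness of the $\alpha_n$ to $C^{k+2,a}$-smallness (taking $k=0$ is in fact enough for the whole argument, so that $\mathcal{L}:\mathcal{Z}^{2,a}\to\mathcal{Z}^{0,a}$ and no bootstrapping beyond $C^{2,a}$ is needed), or simply by running the entire argument with $k=0$ so that the function spaces match the hypothesis exactly. The main obstacle — and really the only nontrivial input — is the injectivity/surjectivity of $\mathcal{L}$ on closed forms, but that is exactly Proposition \ref{prop:iso}, already established; everything else is the soft Implicit-Function-Theorem-style estimate. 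I would also remark that, when $M$ is negative KE, Theorem \ref{thm:uniqueness} recovers the minimal Lagrangian uniqueness of Remark \ref{rem:uniqueness}, but with respect to $J$-minimal (equivalently minimal Lagrangian) competitors rather than arbitrary minimal competitors; the stronger statement that $\iota$ is isolated even among all minimal submanifolds is the content of \cite{White} and of Remark \ref{rem:uniqueness}.
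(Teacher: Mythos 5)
Your proposal is correct and takes essentially the same route as the paper: reduce to closed $1$-forms via Theorem \ref{thm:minlag_is_Jmin} together with the Lagrangian neighbourhood theorem for $(M,\bar\rho)$, and then use that $\mathcal{L}=DF_{|0}$ is a Banach space isomorphism (Proposition \ref{prop:iso}, with $k=0$ so the spaces match the $C^{2,a}$ hypothesis). The only difference is that the paper concludes by invoking the Inverse Function Theorem directly, whereas you reprove its uniqueness half by hand via the quadratic-remainder contradiction estimate in the style of Remark \ref{rem:uniqueness}.
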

\begin{proof}
Using the Lagrangian neighbourhood theorem applied to $\iota$, any  $C^{2,a}$-smooth $J$-minimal immersion $\iota':L\to M$ close to $\iota$ would be parametrised by a closed $1$-form $\alpha'\in\mathcal{U}\subseteq \mathcal{Z}^{2,a}$, satisfying $F(\alpha')=0$. However $\mathcal{L}=DF_{|0}$ is a Banach space isomorphism so the Inverse Function Theorem, applied to the map $F$, shows that $\alpha=0$ is locally the unique zero of $F$
\end{proof}

We now turn to the persistence question. Here we must confront the fact that the $J$-minimal equation is not elliptic so it is unclear whether general $J$-minimal submanifolds are automatically smooth. As explained in the proof below, persistence within the $C^{k,a}$-category can be proved as in Theorem \ref{thm:persistence}. The following theorem shows that, using the exact Maslov map, we can also prove persistence within the $C^\infty$-smooth category.
\begin{thm}\label{thm:persistence_bis}
Let $(M,J,\overline{g},\overline{\omega})$ be a K\"ahler manifold with negative Ricci curvature and let $\iota:L\to M$ be a smooth compact $J$-minimal immersion. For any small K\"ahler deformation $(\overline{g}',\overline{\omega}')$ in the same cohomology class, there exists a unique smooth $J$-minimal immersion $\iota':L\to (M,J,\overline{g}',\overline{\omega}')$ near $\iota$.
\end{thm}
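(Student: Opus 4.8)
The plan is to mirror the proof of Theorem \ref{thm:persistence}, but working with the \emph{scalar} Maslov map $\tF$ of \eqref{F.eq_bis} rather than the vector-valued map $F$, so as to retain ellipticity and hence smoothness. First I would connect $(\overline{g},\overline{\omega})$ to $(\overline{g}',\overline{\omega}')$ by a curve $(\overline{g}_t,\overline{\omega}_t)$ of K\"ahler structures in the same fixed cohomology class, all with negative Ricci curvature (possible for $t$ small since negativity is an open condition). By Proposition \ref{prop:moser}, second bullet, there is a curve $\iota_t$ of $\bar\rho_t$-Lagrangian immersions with $\iota_0=\iota$, obtained via Moser's theorem applied to $(M,\bar\rho_t)$. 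Applying the Lagrangian neighbourhood theorem to each $\iota_t$ in $(M,\bar\rho_t)$, with $t$ small enough to get a $t$-independent neighbourhood $V\subseteq T^*L$, we parametrise nearby $\bar\rho_t$-Lagrangians by closed 1-forms $\alpha\in\mathcal{U}$, and by exact $\bar\rho_t$-Lagrangians (via Lemma \ref{l:exact}, whose hypothesis $\xi_{J}[\iota_t]$ exact holds because $\iota_t$ is $\bar\rho_t$-Lagrangian so $\d\xi_{J_t}[\iota_{\alpha,t}]=\iota_{\alpha,t}^*\bar\rho_t=0$ — actually we need $\xi_J[\iota_t]$ itself to be exact, which we should check: since $\iota$ is $J$-minimal we have $\xi_J[\iota]=0$, hence exact, and then Lemma \ref{l:exact} propagates exactness along the path $t\alpha$... one must be slightly careful that the base immersion $\iota_t$ itself keeps $\xi_{J_t}$ exact as $t$ varies, which again follows from a Moser-type argument along $t$).

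Next I would assemble the family of scalar Maslov maps
\begin{equation*}
\tF:\tU\times[0,\epsilon)\to C^{k+1,a}_0(L),\qquad \tF(f,t)=\text{the integral-zero primitive of }\xi_{J_t}[\iota_{\d f,t}],
\end{equation*}
where $\tU=\{f\in C^{k+3,a}_0(L):\d f\in\mathcal{U}\}$ and the integration normalisation uses $\vol_J[\iota]$. Theorem \ref{thm:minlag_is_Jmin} gives $\tF(f,t)=0$ if and only if $\iota_{\d f,t}$ is $J$-minimal for the structure at time $t$. Because $\tF$ takes values in $C^{k+1,a}_0$ and the linearisation $\tL=D\tF_{|(0,0)}$ restricted to $\tU\times\{0\}$ is exactly the operator of \eqref{tL.eq}, which by Proposition \ref{prop:iso_bis} (negative-definite case) is an elliptic, self-adjoint Banach space isomorphism $C^{k+3,a}_0(L)\to C^{k+1,a}_0(L)$, the Implicit Function Theorem applied to $\tF$ at $(0,0)$ yields, for each small $t$, a unique $f_t\in\tU$ with $\tF(f_t,t)=0$; the corresponding $\iota':=\iota_{\d f_t,t}$ is then $J$-minimal for $(M,J,\overline{g}',\overline{\omega}')$. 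Uniqueness near $\iota$ among \emph{all} nearby $J$-minimal immersions (not only the exact-Lagrangian ones) follows as in Theorem \ref{thm:uniqueness}: any nearby $J$-minimal immersion is automatically $\bar\rho_t$-Lagrangian by Theorem \ref{thm:minlag_is_Jmin}, hence parametrised by a closed 1-form $\alpha\in\mathcal{U}$ with $F(\alpha,t)=0$; since $\mathcal{L}=DF_{|(0,0)}$ is an isomorphism by Proposition \ref{prop:iso}, the full (vector) map $F$ also has $0$ as its unique local zero for small $t$, and one checks that such an $\alpha$ is forced to be exact.

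The smoothness claim is the point where this argument genuinely improves on the $C^{k,a}$-statement obtained by copying Theorem \ref{thm:persistence} verbatim: because $\tF$ maps into a function space and its linearisation $\tL$ is \emph{elliptic}, the solution $f_t$ of the elliptic equation $\xi_{J_t}[\iota_{\d f_t,t}]=\d\tF(f_t,t)$ — equivalently, the nonlinear elliptic scalar PDE $\tF(f_t,t)=0$ — enjoys elliptic regularity, so $f_t\in C^\infty$ and $\iota'$ is a smooth immersion; the $J$-minimal equation itself is not elliptic, but its restriction to exact Lagrangian graphs is. The main obstacle I anticipate is precisely the bookkeeping needed to make the scalar reduction $\tF$ well-defined uniformly in $t$: one must ensure that $\xi_{J_t}[\iota_{\d f,t}]$ is exact (so that a primitive exists) for \emph{all} $(f,t)$ in a neighbourhood of $(0,0)$, not just at $t=0$, and that the normalisation (choice of primitive with zero $\vol_J[\iota]$-integral) depends smoothly on $(f,t)$. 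This requires combining Lemma \ref{l:exact} with a Moser-type argument controlling the base path $\iota_t$, and checking that the cohomology class $[\xi_{J_t}[\iota_{\alpha,t}]]\in H^1(L;\R)$ vanishes identically — which it does, since it is the restriction of $[\bar\rho_t]=2\pi c_1(M)$ to an exact-Lagrangian submanifold and depends continuously on $(\alpha,t)$ while being integer-valued... more directly, $\d\xi_{J_t}[\iota_{\alpha,t}]=\iota_{\alpha,t}^*\bar\rho_t=0$ and the periods vanish by the homotopy argument of Lemma \ref{l:exact} applied in the $t$ variable as well. Once this uniform exactness is in hand, the rest is a routine application of the Implicit Function Theorem together with the isomorphism property from Proposition \ref{prop:iso_bis}.
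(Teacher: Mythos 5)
Your strategy coincides with the paper's: Moser's theorem for $(M,\bar\rho_t)$ via Proposition \ref{prop:moser}, the Lagrangian neighbourhood theorem, the scalar Maslov map \eqref{F.eq_bis}, Proposition \ref{prop:iso_bis} plus the Implicit Function Theorem, elliptic regularity for the scalar equation, and uniqueness via Theorem \ref{thm:uniqueness}. However, the one genuinely new ingredient of this theorem --- showing that $\xi_{J_t}[\iota_t]$ stays exact as the \emph{ambient structure} varies, which you yourself flag as ``the main obstacle'' --- is left unproved, and the two justifications you sketch do not work. The cohomological one is not meaningful: $[\xi_{J_t}[\iota_{\alpha,t}]]$ lives in $H^1(L;\R)$, it is not ``the restriction of $[\bar\rho_t]=2\pi c_1(M)$'' (that restriction is a $2$-class and, via \eqref{eq:dxi=ric}, only gives closedness $\d\xi_{J_t}=\iota_{\alpha,t}^*\bar\rho_t=0$), and its periods are not quantized: $K_M[\iota]$ is trivial and $\xi_{J_t}$ is the connection form relative to the canonical unit section, so its periods are real numbers varying continuously with the metric; closedness plus continuity therefore gives nothing.

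Nor does ``the homotopy argument of Lemma \ref{l:exact} applied in the $t$ variable'' go through verbatim, because along the $t$-direction the form $\Xi=\Xi_t$ on $TR^+(M)$ itself depends on $t$ and the deformation field is the Moser field $X_t$. Repeating the computation of Lemma \ref{l:exact} one finds
\begin{equation*}
\frac{d}{dt}\int_\gamma\xi_{J_t}[\iota_t]=\int_\gamma\iota_t^*\big(X_t\lrcorner\bar\rho_t\big)+\int_\gamma\hat\iota_t^*\dot{\Xi}_t,
\end{equation*}
and neither term vanishes by itself: writing $\bar\rho_t=\bar\rho+\d\d^c f_t$, Moser's equation gives $X_t\lrcorner\bar\rho_t=-\d^c\dot f_t$, whose pullback to a closed curve $\gamma$ is not exact, so the first integral is in general non-zero. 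The missing step (which is precisely what the paper supplies) is the computation of the variation of the Grassmannian connection form, $\dot{\Xi}_t=q^*(\d^c\dot f_t+\d h_t)$, so that the two contributions cancel up to an exact term and Stokes' theorem yields $\frac{d}{dt}\int_\gamma\xi_{J_t}[\iota_t]=0$; combined with $\xi_J[\iota_0]=0$ this gives exactness for all $t$. Without it, the family of scalar maps $\tF(\cdot,t)$ is not known to be defined for $t>0$, so the Implicit Function Theorem argument --- which is otherwise set up correctly, including the regularity and uniqueness conclusions --- cannot start.
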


\begin{proof}
As in the proof of Theorem \ref{thm:persistence}, choose a curve of K\"ahler structures $\bar\omega_t$ connecting $\bar\omega$ and $\bar\omega'$ within the same cohomology class. If the deformation is sufficiently small, we may assume the corresponding Ricci 2-forms $\bar\rho_t$ are negative so we can use Proposition \ref{prop:moser} to build a curve of smooth $\bar\rho_t$-Lagrangian immersions $\iota_t:=\phi_t\circ\iota:L\to M$.

We could now proceed exactly as in the proof of Theorem \ref{thm:persistence}, obtaining a unique family of $J$-minimal totally real immersions. However, these immersions would only be $C^{k+2,a}$-smooth. 
To improve on this result we shall switch to the scalar Maslov map introduced in (\ref{F.eq_bis}).  As a first step this requires proving that the Maslov forms $\xi_J[\iota_t]$ are all exact, \textit{i.e.}~that for all closed curves $\gamma$ in $L$, $\int_\gamma\xi_J[\iota_t]=0$. We will proceed as in Lemma \ref{l:exact}, this time taking into account that $\Xi$ depends on $t$. Thus
 \begin{align*}
  \frac{d}{dt}\int_\gamma \xi_J[\iota_t]&=\int_\gamma\frac{d}{dt}\hat\iota_t^*\Xi_t=\int_\gamma\hat\iota_t^*(\mathcal{L}_{\frac{\partial \hat\iota_t}{\partial t}}\Xi_t+\dot{\Xi}_t)\\
  &=\int_\gamma\hat\iota_t^*\d(\frac{\partial \hat\iota_t}{\partial t}\lrcorner\Xi_t)+\int_\gamma\hat\iota_t^*(\frac{\partial \hat\iota_t}{\partial t}\lrcorner\d\Xi_t+\dot{\Xi}_t)\\
  &=\int_\gamma\iota_t^*(\frac{\partial \iota_t}{\partial t}\lrcorner \bar\rho_t)+\int_\gamma\hat\iota^*\dot{\Xi}_t,
 \end{align*}
where we used Stokes' theorem to cancel one term. 

Since all the $\bar\rho_t$ are cohomologous we can write $\bar\rho_t=\bar\rho+\d\d^cf_t$, for some curve of functions $f_t$ on $M$. It follows that $\frac{\partial}{\partial t}\bar\rho_t=\d\d^c\dot{f}_t$. Recall from the proof of Moser's theorem that the vector field $X_t$ whose flow is $\phi_t$ satisfies the equation $\bar\rho_t(X_t,\cdot)+\d^c\dot f_t=0$. Since $\d\Xi_t=q^*\bar\rho_t$, we find that $\dot\Xi_t=q^*(\d^c\dot f_t+\d h_t)$, for some curve of functions $h_t$ on $M$. Again using Stokes' theorem, we conclude that 
\begin{equation}
\frac{d}{dt}\int_\gamma \xi_J[\iota_t]=\int_\gamma\iota_t^*(X_t\lrcorner\bar\rho_t+\d^c\dot f_t)=0.
\end{equation}
Since initially $\xi_J[\iota]=0$, we conclude that all $\xi_J[\iota_t]$ are exact.

As in Theorem \ref{thm:persistence} we can use the Lagrangian neighbourhood theorem applied to $\iota_t$ to set up a family of scalar Maslov maps as in (\ref{F.eq_bis}),
\begin{equation}
 \tilde{F}:\tU\times [0,\epsilon)\to C^{k+1,a}_0(L).
\end{equation}
The zero set of these maps parametrize the exact $J$-minimal totally real perturbations of $\iota_t$. As in the proof of Theorem \ref{thm:persistence}, using Proposition \ref{prop:iso_bis} and the Implicit Function Theorem we obtain a $J$-minimal totally real immersion for each $t$. Since $\tilde{F}(f,t)=0$ is a scalar elliptic equation, these immersions are smooth. This argument also shows that they are unique within the category of exact deformations. Theorem \ref{thm:uniqueness} yields the stronger result that they are the unique $J$-minimal immersions near $\iota$.
\end{proof}

\section{Examples}\label{s:examples}

One of the main sources of examples of minimal Lagrangian submanifolds is the following.

Let $p$ be a homogeneous polynomial of degree $d>n+1$ on $\C^{n+1}$ such that 
\[
M=\{[z]\in\CP^n\,:\, p(z)=0\}
\]
is  smooth. Let $J$ be the induced complex structure. The adjunction formula shows that the class $c_1(M,J)$ is negative, so the general existence theory of KE metrics proves that $(M,J)$ 
has a unique KE $2$-form $\overline{\omega}$ in the class $-c_1(M,J)$. The corresponding metric $\overline{g}$ has negative scalar curvature. 

If the coefficients of the polynomial $p$ are real, $M$ is invariant under complex conjugation on $\CP^n$: this defines by restriction a \textit{real structure} $\tau\in \Diff(M)$, \textit{i.e.}~an anti-holomorphic involution: 
$\tau^*J=-J$. The $2$-form $\tau^*\overline{\omega}$ is then KE for $(M,\tau^*J)$.
 Clearly $-\overline{\omega}$ has the same property, so by uniqueness $\tau^*\overline{\omega}=-\overline{\omega}$. It follows that $\tau$ is an isometry of the metric $\overline{g}$.

Using the above facts, one can check that the fixed point set $L$ of $\tau$ (if non-empty), often called the \textit{real locus}, is a totally geodesic (thus minimal) Lagrangian submanifold of the KE manifold 
$(M,\overline{g},J,\overline{\omega})$. 

It is clear that this method requires the existence of a special symmetry of $(M,J)$, namely a real structure. It is a good question what happens when such a symmetry does not exist.

In the above setting the corresponding moduli space of KE manifolds can locally be constructing simply by perturbing the polynomial $p$. Using Theorem \ref{thm:persistence} we can now show that there exists a minimal Lagrangian submanifold even if $M$ is deformed so as to lose its real structure.

Concerning $J$-minimal submanifolds, up to now the only known examples are possibly Borrelli's \cite{Borrelli} non-compact examples in $\C^n$, but it is clear that the Ricci-flat case is rather special. Using Theorem \ref{thm:persistence_bis} we can now prove the existence of compact examples in the negative Ricci case.

\begin{cor}
Let $p$ be a homogeneous polynomial on $\C^{n+1}$, of degree $d>n+1$ and with real coefficients, such that 
\[
M=\{[z]\in\CP^n\,:\, p(z)=0\},
\]
endowed with the induced complex structure $J$ and negative KE structure has non-empty real locus $L$.  

\begin{itemize}
\item Let $(M', J')\subseteq \CP^n$ be defined by a polynomial $p'$ obtained by perturbing $p$. 
We endow $M'$ with the corresponding negative K\"ahler--Einstein metric $(\bar g',\bar \omega')$.    

There exists $\epsilon>0$ such that, if the imaginary parts of the coefficients of $p'$ are smaller than $\epsilon$, then there exists a locally unique compact minimal Lagrangian submanifold $L'$ in $M'$. 
\item Let $(M',J,\bar g',\bar\omega')$ denote any small perturbation of $M$ in the same K\"ahler class, with negative Ricci curvature. Then there exists a locally unique compact $J$-minimal submanifold $L'$ in $M'$.
\end{itemize}
These submanifolds are isotopic to the real locus of $M$.
\end{cor}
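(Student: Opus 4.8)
The plan is to deduce the first bullet from Theorem~\ref{thm:persistence} and the second from Theorem~\ref{thm:persistence_bis}, invoking Theorems~\ref{thm:minlag_is_Jmin} and~\ref{thm:uniqueness} along the way; the only extra input is standard information about smooth degree-$d$ hypersurfaces in $\CP^n$ and about the way the Aubin--Yau metric depends on the defining equation. First I would fix the deformation picture. For $d>n+1$ the adjunction formula gives $c_1<0$ for \emph{every} smooth degree-$d$ hypersurface in $\CP^n$, so by Aubin--Yau each carries a unique K\"ahler--Einstein metric in $-c_1$, necessarily of negative scalar curvature; since this metric solves a complex Monge--Amp\`ere equation, elliptic regularity and the implicit function theorem show it depends smoothly --- in every $C^{k,a}$, indeed $C^\infty$, norm --- on the coefficients of the defining polynomial. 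If $p'$ is close enough to $p$, convexity of a small coefficient-ball forces every member of the segment $M_t:=\{[z]:(1-t)p(z)+tp'(z)=0\}$, $t\in[0,1]$, to be a smooth degree-$d$ hypersurface, hence again of negative $c_1$; Ehresmann's fibration theorem then supplies an ambient isotopy $\psi_t$ of $\CP^n$ with $\psi_0=\mathrm{id}$, $\psi_t(M)=M_t$, and pulling back the K\"ahler--Einstein data of $M_t$ by $\psi_t$ gives a curve of negative K\"ahler--Einstein structures on $M$, starting at the original one, whose size is controlled by $\|p'-p\|$.

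For the first bullet, note that if $p'$ has real coefficients then $M'$ inherits a real structure and its real locus is totally geodesic, hence minimal, Lagrangian by the construction recalled just before the statement, so there is nothing to do. In general let $q$ be the polynomial whose coefficients are the real parts of those of $p'$; taking $p'$ close to $p$ we may assume $q$ is close enough to $p$ that $M_q:=\{q=0\}$ is smooth with non-empty real locus $L_q$, and then $L_q$ is a compact minimal Lagrangian in the negative K\"ahler--Einstein manifold $M_q$, isotopic to $L$ (apply Ehresmann along the \emph{real} segment from $p$ to $q$, working in $\mathbb{RP}^n$). Applying the previous paragraph to the segment from $q$ to $p'$ and identifying $M'$ with $M_q$ accordingly, we see there is $\epsilon=\epsilon(q)>0$ such that, once the imaginary parts of the coefficients of $p'$ are all smaller than $\epsilon$, the K\"ahler--Einstein structure of $M'$ is a small negative K\"ahler--Einstein deformation of that of $M_q$ in the sense of Theorem~\ref{thm:persistence}. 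That theorem then gives a locally unique compact minimal Lagrangian near $L_q$, which transported back to $M'$ is the asserted $L'$.

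For the second bullet, $M'$ has the same complex structure $J$ as $M$ and, by hypothesis, $(\bar g',\bar\omega')$ is a small K\"ahler perturbation of $(\bar g,\bar\omega)$ in the class $[\bar\omega]$ with negative Ricci curvature. Since the real locus $L$ of $M$ is minimal Lagrangian it is $J$-minimal by Theorem~\ref{thm:minlag_is_Jmin}, so Theorem~\ref{thm:persistence_bis} produces a smooth compact $J$-minimal immersion $\iota':L\to(M,J,\bar g',\bar\omega')$ near $\iota$, which by Theorem~\ref{thm:uniqueness} is the only $J$-minimal immersion in a $C^{2,a}$-neighbourhood; set $L':=\iota'(L)$. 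For the final isotopy claim: in the second bullet $L'$ is the graph of a $C^1$-small exact $1$-form over $L$, hence ambient isotopic in $M$ to the real locus $L$; in the first bullet $L'$ is $C^1$-close to $L_q$ inside $M'$, so concatenating the family of minimal Lagrangians produced in the proof of Theorem~\ref{thm:persistence} (from $L_q$ to $L'$) with the family of real loci along the real segment from $p$ to $q$, and trivialising the ambient hypersurface family by the maps $\psi_t$, yields an isotopy from $L'$ to the real locus of $M$.

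The one step that requires genuine care --- though it is entirely routine --- is showing that ``$p'$ close to $p$'' really does produce a \emph{small} K\"ahler(--Einstein) deformation in the $C^{k,a}$ topology in which Theorems~\ref{thm:persistence} and~\ref{thm:persistence_bis} are phrased; this is exactly the smooth dependence of the Aubin--Yau metric on parameters, combined with Ehresmann's theorem for the hypersurface family. Granting this, both bullets are direct applications of the results of Section~\ref{s:Lag.KE}.
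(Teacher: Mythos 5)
Your proof is correct and takes essentially the same approach as the paper, which presents the corollary as a direct application of Theorems \ref{thm:persistence} and \ref{thm:persistence_bis} (together with Theorems \ref{thm:minlag_is_Jmin} and \ref{thm:uniqueness}) to the family of negative K\"ahler--Einstein structures obtained by perturbing the defining polynomial, with the real locus supplying the initial minimal Lagrangian. Your additional details --- smooth dependence of the Aubin--Yau metric on the coefficients, the Ehresmann trivialisation of the hypersurface family, and the intermediate real-part polynomial $q$ --- simply make explicit what the paper leaves implicit.
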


A second way to obtain examples of minimal Lagrangians is, somewhat trivially, via products. The deformation theory of complex manifolds shows that the only deformations of products of negative KE manifolds are those obtained by deforming each factor. Theorem \ref{thm:persistence} shows that any minimal Lagrangians will persist. We thus obtain the following result.
\begin{cor}
Let $\iota_1:L_1\to M_1$ and $\iota_2:L_2\to M_2$ 
be compact minimal Lagrangians in negative K\"ahler--Einstein manifolds with the same scalar curvature. 
Let $M_1'$, $M_2'$ be KE deformations of $M_1$, $M_2$ with the same scalar curvature. 
Then the only minimal Lagrangian submanifold in $M_1'\times M_2'$ near $\iota_1\times\iota_2$ is of the form $\iota_1'\times\iota_2'$, 
where $\iota_i'$ is the minimal Lagrangian in $M_i'$ obtained by deforming $\iota_i$. 
\end{cor}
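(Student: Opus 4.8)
The plan is to deduce the statement directly from Theorem~\ref{thm:persistence}, applied once to the product $M_1\times M_2$ and once to each factor, using only elementary facts about products of K\"ahler manifolds and of minimal Lagrangian immersions. Note that no deformation-theoretic input is needed here, since the ambient deformation is \emph{given} as a product $M_1'\times M_2'$; one only has to identify the minimal Lagrangian it carries near $\iota_1\times\iota_2$.

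First I would record the relevant product facts. The hypothesis on the factors ensures that $(M_1\times M_2,J_1\oplus J_2,\bar g_1\oplus\bar g_2,\bar\omega_1\oplus\bar\omega_2)$ is again negative K\"ahler--Einstein, since the Ricci tensor of a Riemannian product is the direct sum of the factors' Ricci tensors. If $\iota_i:L_i\to M_i$ are Lagrangian then $(\iota_1\times\iota_2)^*(\bar\omega_1\oplus\bar\omega_2)=\iota_1^*\bar\omega_1\oplus\iota_2^*\bar\omega_2=0$, so $\iota_1\times\iota_2:L_1\times L_2\to M_1\times M_2$ is Lagrangian; and if each $\iota_i$ is moreover minimal then the second fundamental form of the product immersion is the direct sum of those of $\iota_1$ and $\iota_2$ with no cross terms, so its mean curvature vector is $H_1+H_2=0$. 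Hence $\iota_1\times\iota_2$ is a compact minimal Lagrangian in $M_1\times M_2$.

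Next, from K\"ahler--Einstein deformations $M_i'$ of $M_i$ I would build a curve $M_{i,t}$ of negative KE structures connecting $M_i$ to $M_i'$ and form the product curve $M_{1,t}\times M_{2,t}$, which connects $M_1\times M_2$ to $M_1'\times M_2'$ through negative KE structures and is a small deformation whenever the $M_i'$ are. Theorem~\ref{thm:persistence} applied to $\iota_1\times\iota_2$ and this product deformation then yields a \emph{unique} minimal Lagrangian $\Sigma$ in $M_1'\times M_2'$ near $\iota_1\times\iota_2$. On the other hand, Theorem~\ref{thm:persistence} applied to each $\iota_i$ and the deformation $M_i'$ gives minimal Lagrangians $\iota_i':L_i\to M_i'$ near $\iota_i$; by the product facts above $\iota_1'\times\iota_2'$ is a minimal Lagrangian in $M_1'\times M_2'$, and it lies near $\iota_1\times\iota_2$ because each $\iota_i'$ lies near $\iota_i$. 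Uniqueness forces $\Sigma=\iota_1'\times\iota_2'$, which is exactly the assertion.

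The only point requiring care --- and I do not expect a genuine obstacle --- is the bookkeeping with neighbourhoods: one must check that ``small deformation of each factor'' implies ``small deformation of the product'' at the scale of the tubular and Lagrangian neighbourhoods and Banach balls used in the Implicit Function Theorem step of Theorem~\ref{thm:persistence}, so that any minimal Lagrangian in $M_1'\times M_2'$ sufficiently $C^{2,a}$-close to $\iota_1\times\iota_2$ is captured by the product uniqueness statement. This is routine, since a product of $C^{k,a}$-small neighbourhoods of the zero sections in $T^*L_1$ and $T^*L_2$ is a $C^{k,a}$-small neighbourhood of the zero section in $T^*(L_1\times L_2)$. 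All the analytic content lies in Theorem~\ref{thm:persistence}.
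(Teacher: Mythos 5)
Your argument is correct and is essentially the paper's own (the corollary is justified there only by the preceding paragraph, which invokes the product structure and Theorem \ref{thm:persistence} exactly as you do: the product of the factorwise persisted minimal Lagrangians is a minimal Lagrangian near $\iota_1\times\iota_2$, and the uniqueness in Theorem \ref{thm:persistence} applied to the product forces any nearby minimal Lagrangian to coincide with it). The only extra point the paper records, via deformation theory of complex manifolds, is that KE deformations of a product are necessarily products of deformations of the factors, which is not needed for the statement as you (correctly) read it, since the deformation is given in product form.
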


\thebibliography{99}

\bibitem{Borrelli}  V.~Borrelli, {\it Maslov form and {$J$}-volume of totally real immersions}, J.~Geom.~Phys.~{\bf 25} (1998), 271--290.

\bibitem{Bryant} R.~Bryant, {\it Minimal Lagrangian submanifolds of K\"ahler-Einstein manifolds. Differential geometry and differential equations (Shanghai, 1985)}, 1--12, Lecture Notes in Math., 1255, Springer, Berlin, 1987.

\bibitem{Chen} B.-Y.~Chen, {\it Geometry of submanifolds and its applications}, Science University of Tokyo, Tokyo, 1981.

\bibitem{YILee} Y.-I. Lee, {\it The deformation of Lagrangian minimal surfaces in K\"ahler--Einstein surfaces}, J.~Differential Geom.~{\bf 50} (1998), 299--330.

\bibitem{LPcoupled} J.D.~Lotay and T.~Pacini, {\it From Lagrangian to totally real geometry: coupled flows and calibrations}, arXiv:1404.4227, to appear in CAG.

\bibitem{LPtotally} J.D.~Lotay and T.~Pacini, {\it Complexified diffeomorphism groups, totally real submanifolds and K\"ahler--Einstein geometry}, \\ arXiv:1506.04630, to appear in Trans. AMS.

\bibitem{Oh} Y.-G.~Oh, {\it Second variation and stabilities of minimal Lagrangian submanifolds in K\"ahler manifolds}, Invent.~Math.~{\bf 101} (1990), 501--519.

\bibitem{PMaslov} T.~Pacini, {\it Maslov, Chern-Weil and Mean Curvature}, \\ arXiv:1711.07928.

\bibitem{Smoczyk}  K.~Smoczyk, {\it The Lagrangian mean curvature flow}, Habilitation thesis, Leipzig, 2001.

\bibitem{White} B.~White, {\it The space of minimal submanifolds for varying Riemannian metrics}, Indiana Univ.~Math.~J.~{\bf 40} (1991), 161--200.

\end{document}